\newtheorem{theorem}{Theorem}
\newtheorem{lemma}{Lemma}
\newenvironment{Algorithm}[1][tbh]%
{\begin{myalgo}[#1]
\centering
\begin{minipage}{16.5cm}
\begin{algorithm}[H]}
{\end{algorithm}
\end{minipage}
\end{myalgo}}
\DeclareMathOperator*{\argmin}{arg\,min}
\DeclareMathOperator*{\argmax}{arg\,max}
\DeclareMathOperator*{\sign}{sign}
\newcommand*{\rom}[1]{\expandafter\@slowromancap\romannumeral #1@}
\newcommand{\eqnum}{\refstepcounter{equation}\textup{\tagform@{\theequation}}}
\newcommand{\fixed@sra}{$\vrule height 2\fontdimen22\textfont2 width 0pt\shortrightarrow$}
\newcommand{\shortarrow}[1]{%
  \mathrel{\text{\rotatebox[origin=c]{\numexpr#1*45}{\fixed@sra}}}
}
\newcommand{\change}{}
\title{A Convex Program for Mixed Linear Regression with a Recovery Guarantee for Well-Separated Data}
\author{Paul Hand\thanks{Department of Computational and Applied Mathematics, Rice University, TX.}\ \ and Babhru Joshi\footnotemark[1]}
\date{\today}
\begin{document}
\maketitle 
 
\begin{abstract}
	{We introduce a convex approach for mixed linear regression over $d$ features. This approach is a second-order cone program, based on L1 minimization, which assigns an estimate regression coefficient in $\mathbb{R}^{d}$ for each data point. These estimates can then be clustered using, for example, $k$-means. For problems with two or more mixture classes, we prove that the convex program exactly recovers all of the mixture components in the noiseless setting under technical conditions that include a well-separation assumption on the data. Under these assumptions, recovery is possible if each class has at least $d$ independent measurements. We also explore an iteratively reweighted least squares implementation of this method on real and synthetic data.}
	{mixed linear regression; L1 minimization; second-order cone programming; clustering; iteratively reweighted least squares.}
\end{abstract}


\section{Introduction}
\label{ch:Intro}
We study the mixed linear regression problem with $k$ classes and $m$ data points. Let $\{S_{p}\}_{p=1}^{k}$ be a partition of ${\change \{1,\dots,m\}}$, where $i \in S_{p}$ means that the $i$th data point belongs to the $p$th class. In class $p$, we assume that the data is generated by a linear process with regression coefficients $\beta_{p} \in \mathbb{R}^{d}$. Let the measurements $\{(a_{i},b_{i})\}_{i=1}^{m} \subset \mathbb{R}^{d}\times \mathbb{R}$ be such that for all $p \in {\change  \{1,\dots,k\}}$, 
\begin{equation}\label{problem}
a_{i}^{\top}\beta_{p} = b_{i} \text{ if } i \in S_{p}.	
\end{equation}
 The mixed linear regression problem is to simultaneously estimate $\{\beta_{p}\}_{p =1}^{k}$ and $\{S_{p}\}_{p=1}^{k}$ from  $\{(a_{i},b_{i})\}_{i=1}^{m}$. In the above problem formulation, let $n_{p} = |S_{p}|$. We define $\ell_{i}$ as the $p$ such that $i\in S_{p}$, i.e. $\ell_{i}$ is the label of the $i$th data point.   

Mixed linear regression has broad applications in scenarios that require disentangling data into multiple latent classes and estimating the parameters. One application is resource management in health care, where the classes are patients (with and without illness) and the mixed data is usage of resource and medical care \cite{Deb}. Other applications include music tone perception \cite{Cohen}, subspace clustering \cite{Vidal} and population classification \cite{Leisch}.

Recent work on mixed linear regression has mostly focused on mixtures of two classes, i.e. $k = 2$ \cite{Yi1, Chen1}. For independent, Gaussian measurements, the authors in \cite{Yi1} develop an initialization procedure for the Expectation Minimization (EM) algorithm. This initialization is based on a grid search on the eigenspace of a matrix of second order moments of measurements. In the noiseless setting, the authors prove that their initialization, followed by an EM algorithm recovers the two regression coefficients with high probability if there are $O(d\log^{2}d)$ Gaussian measurements. In \cite{Chen1}, the authors lift the mixed linear regression problem to a low rank non-symmetric matrix completion problem. In the noiseless case with $O(d)$ sub-gaussian measurements, the program exactly recovers the two mixture components with high probability. In both of these approaches, the number of mixture components were restricted to exactly two.

For mixed linear regression with two or more mixture components, tensor-based methods have been introduced in \cite{Chag} and, very recently, in \cite{Yi2}. In \cite{Chag}, low-rank tensor regression and tensor factorization is used to recover the mixture components. In the noiseless setting{\change, i.e. for all $p \in \{1,\dots,k\}$, $a_{i}^{\top}\beta_{p} = b_{i}$ if $i \in S_{p}$}, their recovery theorem does not guarantee exact recovery for any $m$. Instead, it establishes that the recovery error is $O(\frac{1}{\sqrt{m}})$. In \cite{Yi2}, the method of moments is used to generate an initialization for the EM algorithm. For i.i.d. Gaussian measurements, the algorithm provably recovers the mixture components if the number of measurements is $O(k^{10}d)$. {\change For identification of $k$ classes, there must be at least $kd$ measurements. As a result,} this approach has an optimal sample complexity in the number of feature elements, $d$, but its scaling with respect to the number of classes, $k$, is suboptimal. Another drawback of these tensor-based methods is that they are computationally expensive because they operate in high-dimensional spaces.

The work in the present paper brings ideas from convex clustering to the problem of mixed linear regression.  The algorithm we introduce is inspired by the algorithm in \cite{Hocking}.  In that algorithm, a $\ell_1$ minimization in the form of a fused lasso \cite{Tib2} acts to cluster noisy points in $\mathbb{R}^{d}$.  The problem of mixed linear regressions can be viewed as a challenging generalization of clustering in $\mathbb{R}^{d}$ where every measurement is subject to a {\change codimension-1} ambiguity.  We note in particular that in the noiseless case, $\mathbb{R}^{d}$ clustering is trivial whereas mixed linear regression is not.  Consequently, this paper focuses only on the case of noiseless measurements in order to grapple with the difficulty of this codimension-1 ambiguity.

The contribution of the present paper is to introduce a convex algorithm for the mixed linear regression problem with two or more components. In the noiseless setting and under some technical conditions that include well-separation of data, this algorithm exactly recovers the mixture components with $kd$ independent measurements. The convex algorithm is based on an $\ell_{1}$ minimization that assigns an estimate regression coefficient in $\mathbb{R}^{d}$ for each data point. These estimates can then be clustered by, for example, $k$-means. An estimate for each class can then be found by running $k$ separate regressions.


\subsection{Convex Program and Main Result}
\label{sec:Fused Lasso and Main Results}
We introduce a two-step approach for mixed linear regression. This approach introduces a free variable for each data point that acts as the estimate of the mixture component corresponding to that data point. The first step is to solve a second-order cone program to obtain estimated mixture components for each data point, and the second step is to cluster these estimates using $k$-means. The details of these steps are:
\begin{itemize}
\item[1.] Solve the following second order cone program:	
\begin{equation} \label{lasso}
\begin{aligned}
&\argmin_{z_{1}, \dots, z_{m} \in \mathbb{R}^{d}} \sum_{i=1}^{m}\sum_{j = 1}^{m} \|z_{i} - z_{j}\|_{2}\\
&\text{subject to }  a_{i}^{\top}z_{i} = b_{i}, \ i\in {\change \{1,\dots,m\}}.
\end{aligned}
\end{equation}
Each $z_{i}$ in \eqref{lasso} is constrained to belong to the hyperplane corresponding to the measurement $(a_{i},b_{i})$, as per \eqref{problem}. If $\{z_{i}^{\natural}\}_{i=1}^{m}$ minimizes \eqref{lasso}, then $z_{i}^{\natural}$ is an estimate of $\beta_{p}$ for the class $p$ such that $i \in S_{p}$. 
\item[2.] Cluster $\{z_{i}^{\natural}\}_{i=1}^{m}$ using $k$-means and estimate the mixture components corresponding to each class by running $k$ separate regressions. 
\end{itemize}

 Program \eqref{lasso} is an $\ell_{1}$ minimization over all pairs $(i,j)$. Due to the sparsity promoting property of $\ell_{1}$ minimization, in some cases, \eqref{lasso} can find a solution where $z_{i}^{\natural} = z_{j}^{\natural}$ for many pairs $(i,j)$. Let $\{z_{i}^{\sharp}\}_{i=1}^{m}$ be the candidate solution of \eqref{lasso}, i.e. $z_{i}^{\sharp} = \beta_{\ell_{i}}$. In the noiseless case, successful recovery would mean that the minimizer of \eqref{lasso} is equal to the candidate solution, i.e. $z_{i}^{\natural} = \beta_{\ell_{i}}$. Even in this case, note that $\|z_{i}^{\natural}-z_{j}^{\natural}\|_{2} \neq 0$ for most pairs $(i,j)$. Nonetheless, we will prove that if the measurements are well-separated in a certain sense, the minimizer of \eqref{lasso} can still be the candidate solution. 
  
We prove that under two technical assumptions, \eqref{lasso} can exactly recover the regression coefficients in the noiseless case. In order to state these assumptions, let
\begin{equation}\label{vpq}
	v_{pq}  = \frac{\beta_{p}-\beta_{q}}{\|\beta_{p}-\beta_{q}\|_{2}}
\end{equation}
 and define the weighted directions, $v_{p}$, by
 \begin{equation}\label{vp}
 	v_{p} := \frac{1}{\sum_{q\neq p} n_{q}} \sum_{q \neq p}n_{q}v_{pq}.
 \end{equation}
That is, $v_{p}$ is the weighted average of the directions to $\beta_{p}$ from other mixture components $\{\beta_{q}\}_{q\neq p}$.

Our first assumption is that the measurements are ``well-separated" in the following sense:
\begin{equation}\label{well-separation}
\max_{p \in [k]}\max_{i \in S_{p}} \frac{\|P_{v_{p}^{\perp}}a_{i}\|_{2}}{\|P_{v_{p}}a_{i}\|_{2}} < \frac{1}{2}\min_{p\in [k]}\frac{n_{p}}{m},	
\end{equation}
where $P_{v_{p}}$ is the projector onto the span of $\{v_{p}\}$ and $P_{v_{p}^{\perp}}$ is the projector onto the subspace orthogonal to $v_{p}$. Intuitively, $\{(a_{i},b_{i})\}_{i =1}^{m}$ is well-separated if $a_{i}$ is approximately parallel to $v_{l_{i}}$. This condition ensures that the hyperplanes corresponding to a fixed class are within a small angle of each other. Note that $\frac{\|P_{v}^{\perp}a\|_{2}}{\|P_{v}a\|_{2}}=0$ if $v$ is parallel to $a$ and is small if $v$ is approximately parallel to $a$. Examples of well-separated data and not well-separated data are shown in Figures \ref{rec} and \ref{fail}.

Our second assumption is that the measurements are ``balanced" in the following sense:
\begin{equation}\label{balance}
\sum_{i\in S_{p}}\sign(v_{p}^{\top}a_{i})\frac{P_{v_{p}^{\perp}}a_{i}}{\|P_{v_{p}}a_{i}\|_{2}} = 0,\text{ for all } p \in {\change \{1,\dots,k\}}. 	
\end{equation}
Intuitively, $\{(a_{i},b_{i})\}_{i =1}^{m}$ is balanced if a particular average of the $\{a_{i}\}_{i\in S_{p}}$ is exactly in the direction of $v_{p}$. Examples of balanced data and not balanced data are shown in Figures \ref{rec} and \ref{fail}.

\begin{figure}[H]
	\begin{subfigure}{0.47\textwidth}
		\captionsetup{skip=-1pt} 
		\psfragscanon
		\psfrag{a}{$\beta_{1}$}
		\psfrag{b}{$\beta_{2}$}
		\psfrag{c}{$\beta_{3}$}
		\psfrag{e}{$v_{1}$}
		\psfrag{f}{$v_{2}$}	
		\psfrag{g}{$v_{3}$}
		\centering
		\includegraphics[scale = .4,center]{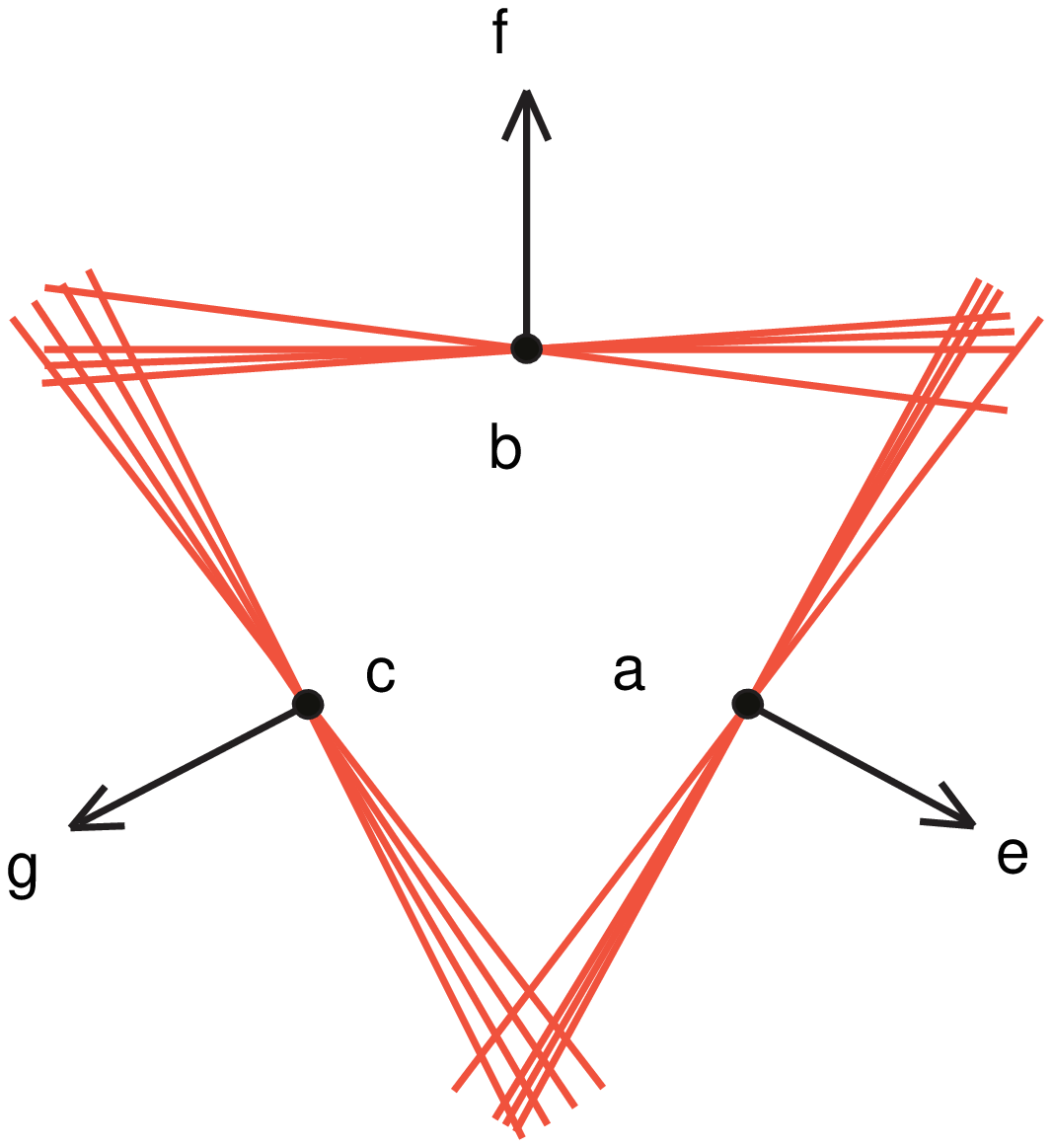} 
		\caption{Well-separated and balanced}
		\label{rec}
	\end{subfigure}
	\quad 
	\begin{subfigure}{0.47 \textwidth}
		\captionsetup{skip=-1pt} 
		\psfragscanon
		\psfrag{a}{$\beta_{1}$}
		\psfrag{b}{$\beta_{2}$}
		\psfrag{c}{$\beta_{3}$}
		\psfrag{e}{$v_{1}$}
		\psfrag{f}{$v_{2}$}
		\psfrag{g}{$v_{3}$}
		\centering
		\includegraphics[scale = .4,center]{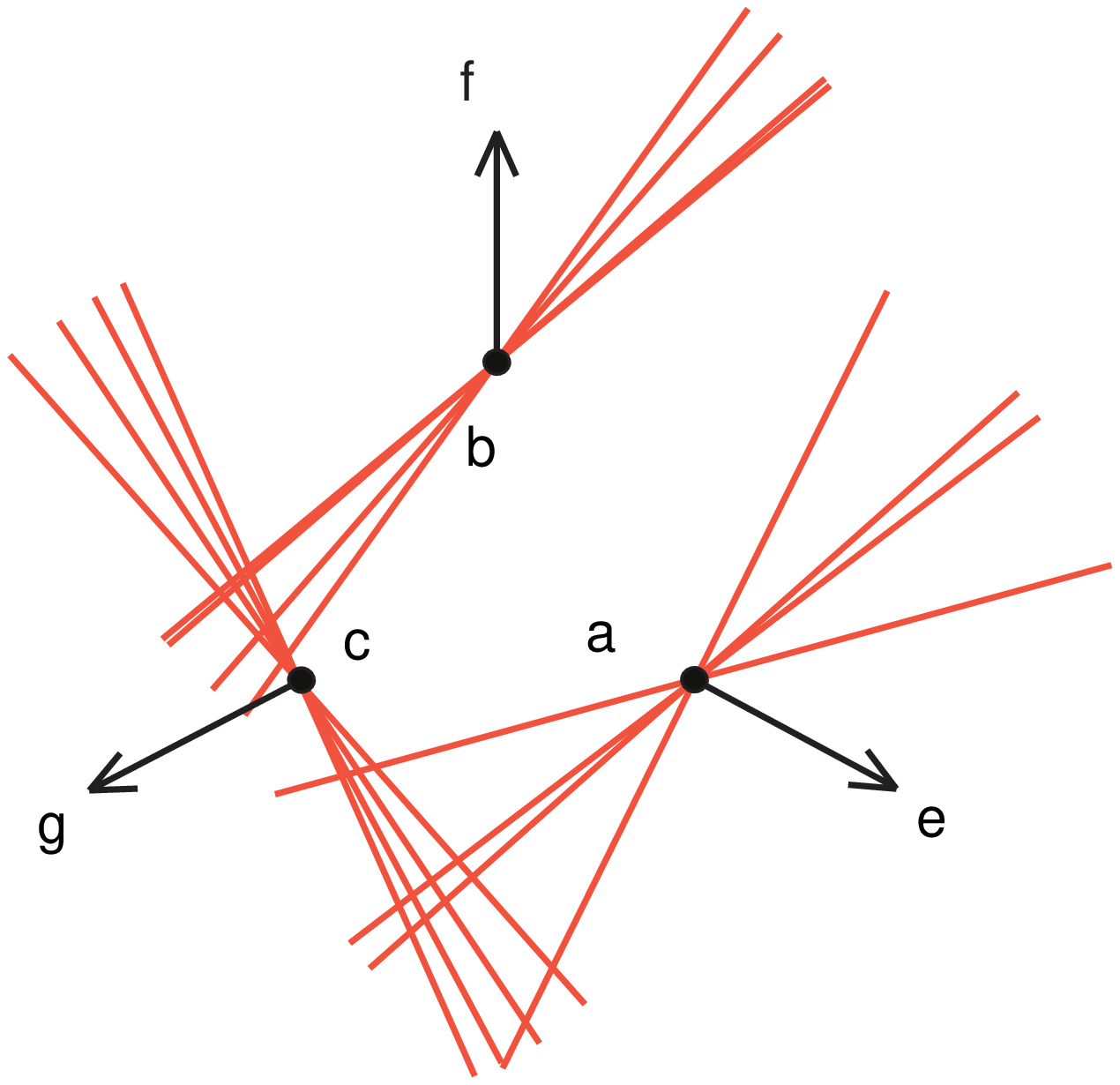} 
		\caption{Not well-separated and not balanced}
		\label{fail}
	\end{subfigure}
	\captionsetup{oneside,margin={0em,0em}}
	\caption{The collection of lines in panel (a) is well-separated in the sense of \eqref{well-separation}. These measurements also satisfy the balance condition \eqref{balance}. {\change In contrast, the collection of lines in panel (b) is not well-separated because the angle between $\nu_{1}$ and a few measurements corresponding to $\beta_{1}$ is too big. Similarly, the collection of lines in panel (b) is not balanced because the weighted average of the normal vectors corresponding to measurements in $\beta_2$ is not in the direction of $\nu_2$.} Note that $v_{p}$ is the weighted average of the directions to $\beta_{p}$ from other mixture components $\{\beta_{q}\}_{q\neq p}$.}
\end{figure}
\vspace{-.3cm}
 
Our main result is that in the noiseless case, \eqref{lasso} uniquely recovers the mixture components with well-separated and balanced measurements, provided there are $d$ independent measurements per class. 
\begin{theorem}\label{thm:recovery}
Fix $\{\beta_{p}\}_{p=1}^{k} \subset \mathbb{R}^{d}$. Let $\{(a_{i},b_{i})\}_{i=1}^{m} \subset \mathbb{R}^{d} \times \mathbb{R}$ be measurements that satisfy (\ref{problem}). If the  measurements satisfy the well-separation condition \eqref{well-separation}, the balance condition \eqref{balance} and for all $p \in [k]$, span$\{a_{i}:i\in S_{p}\} = \mathbb{R}^{d}$, then the solution to \eqref{lasso} is unique and satisfies ${z_{i}} = \beta_{l_{i}}$ for all $i \in [m]$.
\end{theorem}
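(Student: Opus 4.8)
The plan is to build an explicit dual certificate showing that the candidate point $z^{\sharp}$ with $z_i^{\sharp}=\beta_{\ell_i}$ is the unique minimizer of \eqref{lasso}. Since \eqref{lasso} minimizes a finite convex function over the affine set $\{z:a_i^{\top}z_i=b_i,\ i\in[m]\}$, the point $z^{\sharp}$ is optimal as soon as some subgradient $g$ of the objective at $z^{\sharp}$ is orthogonal to every feasible direction; as the feasible directions are exactly the tuples $h$ with $a_i^{\top}h_i=0$ for all $i$, this means $g_i=\lambda_i a_i$ for some scalars $\lambda_i$. Evaluating the subdifferential of the (equivalent) objective $\sum_{i<j}\|z_i-z_j\|_2$ at $z^{\sharp}$ gives $g_i=\sum_{j\neq i}s_{ij}$, where $s_{ij}=v_{\ell_i\ell_j}$ when $\ell_i\neq\ell_j$ and $s_{ij}$ is any vector with $\|s_{ij}\|_2\le1$ and $s_{ij}=-s_{ji}$ when $\ell_i=\ell_j$. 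Splitting off the inter-class terms, which contribute $\sum_{q\neq p}n_q v_{pq}=(m-n_p)v_p$ by \eqref{vp}, the certificate conditions reduce to finding, within each class $S_p$, antisymmetric $s_{ij}$ of norm at most $1$ with $\sum_{j\in S_p\setminus\{i\}}s_{ij}=r_i$, where $r_i:=\lambda_i a_i-(m-n_p)v_p$.

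The next step is to pin down the multipliers by forcing $r_i$ to be orthogonal to $v_p$. Taking $\lambda_i=\sign(v_p^{\top}a_i)\frac{(m-n_p)\|v_p\|_2}{\|P_{v_p}a_i\|_2}$, which is well defined because \eqref{well-separation} forces $P_{v_p}a_i\neq 0$, makes $r_i=\lambda_i P_{v_p^{\perp}}a_i$ and hence $\|r_i\|_2=(m-n_p)\|v_p\|_2\frac{\|P_{v_p^{\perp}}a_i\|_2}{\|P_{v_p}a_i\|_2}$. Two observations then finish the construction. First, because $\|v_p\|_2\le1$ (a convex combination of unit vectors) and $m-n_p<m$, the well-separation bound \eqref{well-separation} yields the strict estimate $\|r_i\|_2<\tfrac12 n_p$ for every $i\in S_p$. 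Second, $\sum_{i\in S_p}r_i=(m-n_p)\|v_p\|_2\sum_{i\in S_p}\sign(v_p^{\top}a_i)\frac{P_{v_p^{\perp}}a_i}{\|P_{v_p}a_i\|_2}$, which is exactly $0$ by the balance condition \eqref{balance}. Given $\sum_{i\in S_p}r_i=0$, the symmetric choice $s_{ij}=(r_i-r_j)/n_p$ for $i,j\in S_p$ satisfies $\sum_{j\in S_p\setminus\{i\}}s_{ij}=r_i$ and $\|s_{ij}\|_2\le(\|r_i\|_2+\|r_j\|_2)/n_p<1$; combined with $s_{ij}=v_{\ell_i\ell_j}$ across classes (and $v_{pq}=-v_{qp}$, so antisymmetry is global) this is the desired certificate, so $z^{\sharp}$ solves \eqref{lasso}.

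For uniqueness I would use the certificate inequality pair by pair: for any feasible $z$, $\|z_i-z_j\|_2\ge\|z_i^{\sharp}-z_j^{\sharp}\|_2+\langle s_{ij},(z_i-z_j)-(z_i^{\sharp}-z_j^{\sharp})\rangle$, and summing over pairs the linear part collapses to $\sum_i\lambda_i a_i^{\top}(z_i-z_i^{\sharp})=0$ by feasibility, whence $\sum_{i<j}\|z_i-z_j\|_2\ge\sum_{i<j}\|z_i^{\sharp}-z_j^{\sharp}\|_2$. If $z$ is also optimal, equality must hold in every pairwise inequality; for an intra-class pair $i,j\in S_p$ this reads $\|z_i-z_j\|_2=\langle s_{ij},z_i-z_j\rangle$, which, since $\|s_{ij}\|_2<1$ strictly, forces $z_i=z_j$. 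Hence $z$ is constant on each $S_p$, and feasibility together with $\mathrm{span}\{a_i:i\in S_p\}=\mathbb{R}^{d}$ forces that constant to equal $\beta_p$, so $z=z^{\sharp}$.

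I expect the delicate points to be the correct formulation of the optimality condition (in particular the global antisymmetry $s_{ij}=-s_{ji}$, automatic across classes because $v_{pq}=-v_{qp}$) and, above all, the strict inequality $\|s_{ij}\|_2<1$: the entire uniqueness argument rests on that strictness, and it is precisely where the factor $\tfrac12$ of slack in \eqref{well-separation} is spent, via $\|v_p\|_2\le1$ and $m-n_p<m$. The balance condition \eqref{balance}, by contrast, plays only the auxiliary role of making the intra-class system $\sum_{j\in S_p\setminus\{i\}}s_{ij}=r_i$ consistent. The degenerate case $n_p=1$, which forces $d=1$, is handled directly by the feasibility-plus-span step.
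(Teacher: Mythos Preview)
Your proposal is correct and follows essentially the same route as the paper: the paper first isolates a certificate lemma (their Lemma~\ref{exact}) stating that antisymmetric $\xi_{ij}$ with $\|\xi_{ij}\|_2<1$ satisfying $\nu_i a_i=\sum_{j\in S_p\setminus\{i\}}\xi_{ij}+\sum_{q\neq p}n_q v_p$ force uniqueness, and then constructs exactly your certificate $\nu_i=\lambda_i$ and $\xi_{ij}=(r_i-r_j)/n_p=(\nu_i P_{v_p^\perp}a_i-\nu_j P_{v_p^\perp}a_j)/n_p$, verifying the norm bound via \eqref{well-separation} with $\|v_p\|_2\le 1$, $\sum_{q\neq p}n_q<m$, and consistency via \eqref{balance}. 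Your uniqueness argument via equality in the pairwise subgradient inequalities is the same mechanism as the paper's perturbation estimate $f(Z^\sharp+H)>f(Z^\sharp)$, just phrased contrapositively.
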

{\change 
Numerical simulation on synthetic data verify Theorem \ref{thm:recovery}. We present the results of numerical simulation on two types of synthetic data, both satisfying the well-separation property \eqref{well-separation}. In the first simulation, the data satisfies the balance property \eqref{balance} as well. The second simulation shows that recovery is possible in the imbalanced case. We also present the results of numerical simulation on two real datasets and show that the algorithm presented in this paper estimate the regression coefficients and classifies the datasets reasonably well.


\subsection{Discussion and future work}

We propose a convex approach for mixed linear regression with $k$ regression coefficients. An interesting observation in the noiseless case is that the convex program \eqref{lasso} can recover both the sparsity pattern and the regression coefficients exactly. In contrast, consider the Basis Pursuit Denoise (BPDN) program 
\begin{equation}\label{lasso_orginal}
	\min_{x\in \mathbb{R}^{n}} \|x\|_1 + \lambda \|Ax-b\|_2^2	
\end{equation}
in the case where $b = Ax_{o}$, for a sparse $x_o$. For appropriate $\lambda$, the optimizer of \eqref{lasso_orginal} will have the correct sparsity pattern but incorrect estimates of the coefficients of $x_o$, see \cite{Li}. Given that both BPDN and \eqref{lasso} are based on L1 minimization, a natural question is why can \eqref{lasso} recover both sparsity pattern and the estimates? This is because if the sparsity pattern is recovered using \eqref{lasso} in the noiseless case  then $z_i = z_j$ for $i, \ j$ in the same class. Since there are sufficient independent points corresponding to each class, there is only one possible value of $z_i$. In the noisy case of mixed linear regression, an estimation step like clustering using k-means is required. This is analogous to the estimation step required for estimating the regression coefficient from the solution to the BPDN program. 

Another interesting observation is that the program \eqref{lasso} does not have a free parameter when posed in the noiseless case. This is analogous to the Basis Pursuit program 
\begin{equation}\label{BP}
\min_{x\in \mathbb{R}^{n}} \|x\|_1 \hspace{1mm} \text{  subject to  } \hspace{1mm}  Ax=b
\end{equation}
which also does not have any parameter in the noiseless case. 

In this paper, we analyze the noiseless mixed linear regression problem and provide a recovery guarantee when data satisfies conditions \eqref{well-separation} and \eqref{balance}. The current paper is mainly focused in providing an understanding of the well-separation condition while assuming that the data is balanced. However, real data is never exactly balanced. Thus, understanding the  level of imbalance the algorithm can handle is a fruitful area of future work. In the same vein, another important research direction is to consider a more complete model that include noise and corruption. 

}


\subsection{Organization of the paper}
The remainder of the paper is organized as follows. In Section \ref{notation}, we present notations used throughout the paper. In Section \ref{Proof}, we present the proof of Theorem \ref{thm:recovery}. In Section \ref{Simulation}, we introduce an iteratively reweighted least squares implementation that solves \eqref{lasso}. We observe its performance on real and synthetic data.


\subsection{Notation}\label{notation}
Let $[n] = \{1,\dots,n\}$. Let $I_{d\times d}$ be the $d\times d$ identity matrix. Let $B_{r}^{d}$ be the $d$ dimensional {\change Euclidean} ball of radius $r$ centered at the origin {\change and let $S_{r}^{d}$ be the $d$ dimensional Euclidean sphere of radius $r$ centered at the origin}. For any matrix $X$ and any vector $v$, let $\|X\|_{F}$ be the Frobenius norm of $X$ and let $\|v\|_{2}$ be the $\ell_{2}$ norm of $v$. For any non-zero vector $v$, let $\hat{v} = \frac{v}{\ \|v\|_{2}}$. For a vector $v \in \mathbb{R}^{d}$, let $P_{v^{\perp}} = I_{d\times d} - \hat{v}\hat{v}^{\top}$ and $P_{v} = \hat{v}\hat{v}^{\top}$. For a vector $a_{i} \in \mathbb{R}^{d}$, let $a_{is}$ be the $s$th element of $a_i$. Throughout the paper, indices $i$ and $j$ are related to data points and indices $p$ and $q$ are related to mixture components.


\section{Proof}\label{Proof}

We will prove Theorem \ref{thm:recovery} by constructing an exact dual certificate. {\change A dual certificate is a dual optimal point which certifies that the candidate solution is globally optimal, see \cite{Hand,Candes, Fuchs}.} To arrive at the form of the dual certificate we now derive the KKT conditions for \eqref{lasso}. Let $f:(\mathbb{R}^{d})^{m}\rightarrow \mathbb{R}$ be
\[f(Z) = f(z_{1},\dots,z_{m}) = \sum_{i = 1}^{m}\sum_{j=1}^{m} \|z_{i} - z_{j}\|_{2}. \]
The augmented Lagrangian for \eqref{lasso} is
\begin{equation*}
\mathcal{L}(Z,\nu) = 	\sum_{i = 1}^{m}\sum_{j=1}^{m} \|z_{i} - z_{j}\|_{2} - \sum_{i=1}^{m}\nu_{i}\left(a_{i}^{\top}z_{i}-b_{i}\right),
\end{equation*}
where $\nu \in \mathbb{R}^{m}$ is a Lagrange multiplier. Let $Z^{\sharp}$ be the candidate solution, i.e. $z_{i}^{\sharp} = \beta_{l_{i}}$ for all $i \in {\change \{1,\dots,m\}}$. The first order optimality conditions for \eqref{lasso} at $Z^{\sharp}$ are
\begin{align}
& 0 \in \partial_{Z} \left(\sum_{i = 1}^{m}\sum_{j=1}^{m} \|z_{i} - z_{j}\|_{2}\right)(Z^{\sharp}) - \partial_{Z}\left(\sum_{i=1}^{m}\nu_{i}\left(a_{i}^{\top}z_{i}-b_{i}\right)\right)(Z^{\sharp}),\\
&	a_{i}^{\ T}z_{i}^{\sharp} = b_{i},\ i\in {\change \{1,\dots,m\}},
\end{align}
where  $\partial_{Z} f(Z^{\sharp})$ is the subdifferential of $f$ with respect to $Z$ evaluated at $Z^{\sharp}$. Note that 
\begin{equation}
	\partial_{(z_{i},z_{j})} \|z_{i}-z_{j}\|_{2}(z_{i}^{\sharp},z_{j}^{\sharp})
	= \left\{ \begin{array}{ll}
 				\left\{\left[\begin{array}{c}
								v_{pq}\\
								-v_{pq}	
								\end{array} \right]\right\}, & \ell_{i} = p,\ \ell_{j} = q,\ p\neq q\\
				\left\{\left[\begin{array}{c}
								\xi_{ij}\\
								-\xi_{ij}	
								\end{array}\right]: \|\xi_{ij}\|_{2}\leq 1 \right\}, & z_{i}^{\sharp} = z_{j}^{\sharp}
				 \end{array}\right.
\end{equation}
where $v_{pq}$ is defined in \eqref{vpq} and $\xi_{ij} \in \mathbb{R}^{d}$. So, the KKT conditions for \eqref{lasso} are: for all $p \in {\change \{1,\dots,k\}}$ and $i \in S_{p}$,
\begin{equation}
0 \in \left\{\sum_{j\in S_{p}, j\neq i} \xi_{ij} + \sum_{q\neq p}n_{q}v_{pq} - \nu_{i}a_{i}: \xi_{ij} \in \mathbb{R}^{d},\ \|\xi_{ij}\|_{2}\leq 1,\ \xi_{ij} = -\xi_{ji},\ \nu_{i} \in \mathbb{R}\right\} 
\end{equation}

We will now prove that if such $\xi_{ij}$ and $\nu$ exist and if all $\|\xi_{ij}\|_{2}<1$, then the unique output of \eqref{lasso} is the candidate solution $Z^{\sharp}$.
\begin{lemma} \label{exact}  Fix $\{\beta_{p}\}_{p=1}^{k} \subset \mathbb{R}^{d}$. Let $\{(a_{i},b_{i})\}_{i=1}^{m} \subset \mathbb{R}^{d} \times \mathbb{R}$ be measurements that satisfy \eqref{problem}. Assume for $p \in {\change \{1,\dots,k\}}$ and for all $i,j \in S_{p}$ , there exists $\xi_{ij} \in \mathbb{R}^{d}$ and $\nu_{i} \in \mathbb{R}$ such that
\begin{align}
	&\nu_{i}a_{i}=\sum_{j\in S_{p}, j\neq i} \xi_{ij} + \sum_{q\neq p}n_{q}v_{p}  ,  \label{S1}\\
     &\|\xi_{ij}\|_{2} < 1 \label{S2}, \\
	 &\xi_{ij} = -\xi_{ji} \label{S3}. 
\end{align}
Also, assume for all $p \in {\change \{1,\dots,k\}}$, span$\{a_{i}:i\in S_{p}\}=\mathbb{R}^{d}$. Then, $Z^{\sharp}$ is the unique solution to \eqref{lasso}.
\end{lemma}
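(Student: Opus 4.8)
The plan is to run the standard ``strict dual certificate implies unique minimizer'' argument. The hypotheses \eqref{S1}--\eqref{S3} are exactly what is needed to assemble a subgradient $G\in\partial f(Z^{\sharp})$ whose $i$-th block is a scalar multiple of $a_{i}$; feasibility then annihilates the linearization term, and the strict bound $\|\xi_{ij}\|_{2}<1$ together with the spanning hypothesis promotes the resulting inequality to uniqueness.

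First I would record the subdifferential of a single summand $(z_{i},z_{j})\mapsto\|z_{i}-z_{j}\|_{2}$ at $Z^{\sharp}$, as already done in the KKT derivation above: it is the singleton $\{(v_{pq},-v_{pq})\}$ when $\ell_{i}=p\neq q=\ell_{j}$, since then $z_{i}^{\sharp}-z_{j}^{\sharp}=\beta_{p}-\beta_{q}\neq 0$, and it is $\{(\xi,-\xi):\|\xi\|_{2}\le 1\}$ when $\ell_{i}=\ell_{j}$. Using \eqref{S3} to make a consistent antisymmetric choice of $\xi_{ij}$ on each same-class pair, I then define $G\in(\mathbb{R}^{d})^{m}$ whose $i$-th block (for $i\in S_{p}$) is $\sum_{j\in S_{p},\,j\neq i}\xi_{ij}+\sum_{q\neq p}n_{q}v_{pq}$; this is a genuine element of $\partial f(Z^{\sharp})$ (the overall factor of $2$ coming from the symmetric double sum is immaterial). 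Since $\sum_{q\neq p}n_{q}v_{pq}=\bigl(\sum_{q\neq p}n_{q}\bigr)v_{p}$ by the definition \eqref{vp} of $v_{p}$, hypothesis \eqref{S1} says precisely that this block equals $\nu_{i}a_{i}$.

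Next, let $Z$ be any feasible point and set $h_{i}=z_{i}-\beta_{\ell_{i}}$, so that $a_{i}^{\top}h_{i}=0$ for every $i$. Summing the supporting-hyperplane inequality $\|z_{i}-z_{j}\|_{2}\ge\|z_{i}^{\sharp}-z_{j}^{\sharp}\|_{2}+\langle g_{ij},(z_{i}-z_{i}^{\sharp})-(z_{j}-z_{j}^{\sharp})\rangle$ over all pairs, where $g_{ij}$ is the chosen subgradient, collapses the linear part to $\sum_{i}\langle G_{i},h_{i}\rangle=\sum_{i}\nu_{i}\,a_{i}^{\top}h_{i}=0$, giving $f(Z)\ge f(Z^{\sharp})$ and hence optimality of $Z^{\sharp}$. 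For uniqueness, suppose $f(Z)=f(Z^{\sharp})$; then every summand inequality is tight. On a same-class pair $i,j\in S_{p}$ tightness reads $\|z_{i}-z_{j}\|_{2}=\langle\xi_{ij},z_{i}-z_{j}\rangle\le\|\xi_{ij}\|_{2}\,\|z_{i}-z_{j}\|_{2}$, which forces $z_{i}=z_{j}$ because $\|\xi_{ij}\|_{2}<1$. Thus $z_{i}$ is constant on each $S_{p}$, say $z_{i}\equiv c_{p}$, and feasibility gives $a_{i}^{\top}(c_{p}-\beta_{p})=0$ for all $i\in S_{p}$; since $\mathrm{span}\{a_{i}:i\in S_{p}\}=\mathbb{R}^{d}$, we conclude $c_{p}=\beta_{p}$, i.e. $Z=Z^{\sharp}$.

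There is no serious obstacle here: this is the textbook convex-duality argument, and the only points needing care are the bookkeeping of the symmetric double sum (the antisymmetry \eqref{S3} is what makes $G$ a legitimate subgradient, and the factor $2$ is harmless), the identity $\sum_{q\neq p}n_{q}v_{pq}=(\sum_{q\neq p}n_{q})v_{p}$ so that \eqref{S1} matches the subgradient block, and the two uses of sharpness --- the strict inequality in \eqref{S2} to force equality of the $z_{i}$ within a class, and the spanning hypothesis to then pin down $c_{p}=\beta_{p}$. The genuinely hard part, constructing $\xi_{ij}$ and $\nu_{i}$ satisfying \eqref{S1}--\eqref{S3} from the well-separation \eqref{well-separation} and balance \eqref{balance} conditions, is a separate matter not addressed by this lemma.
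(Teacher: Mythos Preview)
Your argument is correct and is the standard strict-dual-certificate route; the paper's proof is a mild reorganization of the same idea. The paper linearizes only the cross-class summands (where $\|\cdot\|_{2}$ is differentiable, with gradient $v_{pq}$), keeps the same-class summands $\|h_{i}-h_{j}\|_{2}$ exact, and then uses \eqref{S1}, feasibility, antisymmetry \eqref{S3} and Cauchy--Schwarz to bound the cross term, obtaining the quantitative gap
\[
f(Z^{\sharp}+H)-f(Z^{\sharp})\ \ge\ (1-\gamma)\sum_{p}\sum_{i,j\in S_{p}}\|h_{i}-h_{j}\|_{2},\qquad \gamma=\max\|\xi_{ij}\|_{2}<1,
\]
and then invokes the spanning hypothesis to show the right side is strictly positive when $H\neq 0$. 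You instead linearize every summand at once (using $\xi_{ij}$ as the subgradient on same-class pairs), so the linear part collapses to zero by \eqref{S1} and feasibility, and you recover uniqueness by arguing termwise equality forces $\|z_{i}-z_{j}\|_{2}=\langle\xi_{ij},z_{i}-z_{j}\rangle$ on same-class pairs, hence $z_{i}=z_{j}$ since $\|\xi_{ij}\|_{2}<1$. Both routes use exactly the same ingredients; the paper's version yields a slightly more informative quantitative inequality, while yours is a line shorter.
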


\begin{proof} We will show that $f(Z^{\sharp} + H) > f(Z^{\sharp})$ for any feasible perturbation $H\neq 0$. We first separate $f(Z^{\sharp}+H)$ into two parts.
\begin{equation}\label{split}
	f(Z^{\sharp}+H) = \sum_{p=1}^{k}\sum_{i,j \in S_{p}}\|h_{i}-h_{j}\|_{2} + \underbrace{\sum_{p=1}^{k}\sum_{q\neq p}\sum_{i\in S_{p}}\sum_{j \in S_{q}}\|z_{i}^{\sharp} -z_{j}^{\sharp}+h_{i}-h_{j}\|_{2}}_{\rom{1}} 
\end{equation}
To bound $\rom{1}$ from below, note that $\|z_{i}^{\sharp} -z_{j}^{\sharp}+h_{i}-h_{j}\|_{2} \geq \|z_{i}^{\sharp} -z_{j}^{\sharp}\|_{2}+v_{pq}^{\top	}(h_{i}-h_{j})$ because $v_{pq}$, defined in \eqref{vpq}, is a subgradient of $\|\cdot\|_{2}$ at $z_{i}^{\sharp}-z_{j}^{\sharp}$ if $i \in S_{p}$ and $j \in S_{q}$. Thus, 
\begin{align}
	\rom{1} = & \sum_{p=1}^{k}\sum_{q\neq p}\sum_{i\in S_{p}}\sum_{j \in S_{q}}\|z_{i}^{\sharp} -z_{j}^{\sharp}+h_{i}-h_{j}\|_{2}\\
	 \geq & \sum_{p=1}^{k}\sum_{q\neq p}\sum_{i\in S_{p}}\sum_{j \in S_{q}}\|z_{i}^{\sharp} - z_{j}^{\sharp}\|_{2} + \sum_{p=1}^{k}\sum_{q\neq p}\sum_{i\in S_{p}}\sum_{j \in S_{q}}v_{pq}^{\top}(h_{i}-h_{j})\\
	 = & f(Z^{\sharp})+ \sum_{p=1}^{k}\sum_{q\neq p}\sum_{i\in S_{p}}\sum_{j \in S_{q}}v_{pq}^{\top}h_{i} - \sum_{p=1}^{k}\sum_{q\neq p}\sum_{i\in S_{p}}\sum_{j \in S_{q}}v_{pq}^{\top}h_{j}\\
	= & f(Z^{\sharp})+\sum_{p=1}^{k}\sum_{q\neq p}\sum_{i \in S_{p}}n_{q}v_{pq}^{\top}h_{i}+\sum_{p=1}^{k}\sum_{q\neq p}\sum_{j\in S_{q}}n_{p}v_{qp}^{\top}h_{j} \label{substi}\\ 
	= & f(Z^{\sharp})+2\sum_{p=1}^{k}\sum_{i \in S_{p}}\sum_{q\neq p}n_{q}v_{p}^{\top}h_{i}\label{def}.
\end{align}
Note that in \eqref{substi} we used $v_{pq} = -v_{qp}$ and in \eqref{def} we used \eqref{vp}. Combining \eqref{split} and \eqref{def}, it suffices to show   
\begin{equation}\label{perturbation}
	\sum_{p=1}^{k}\sum_{i,j \in S_{p}}\|h_{i}-h_{j}\|_{2} + 2\underbrace{\sum_{p=1}^{k}\sum_{i\in S_{p}}\sum_{q\neq p}n_{q}v_{p}^{\top}h_{i}}_{\rom{2}} >0.
\end{equation}
for all feasible $H \neq 0$.

First, we show that for all feasible $H \neq 0$, there exists $p \in {\change \{1,\dots,k\}}$ and $i,j \in S_{p}$ such that $h_{i} \neq h_{j}$. We provide a proof by contradiction. Assume for all $p \in {\change \{1,\dots,k\}}$ and $i,j \in S_{p}$, $h_{i} = h_{j}$.  Fix $p\in {\change \{1,\dots,k\}}$. Let $h_{j} = c_{p}$ for all $j \in S_{p}$. Since $c_{p}$ is a feasible perturbation, $a_{j}^{\top}c_{p} = 0$ for all $j \in S_{p}$. Thus, $c_{p}$ is orthogonal to any element in span$\{a_{j}:j\in S_{p}\} = \mathbb{R}^{d}$. Hence, $h_{j} = c_{p} = 0$ for all $j \in S_{p}$, which contradicts $H \neq 0$.

We now compute
\begin{align}
\rom{2} = & \sum_{p=1}^{k}\sum_{i \in S_{p}}\bigg(\sum_{q\neq p}n_{q}v_{p}\bigg)^{\top}h_{i} \\
= & \sum_{p=1}^{k}\sum_{i \in S_{p}}\bigg(\nu_{i}a_{i} - \sum_{j\in S_{p},j\neq i}\xi_{ij}\bigg)^{\top}h_{i}\label{sub2}\\
= & -\frac{1}{2}\sum_{p=1}^{k}2\sum_{i \in S_{p}}\sum_{j\in S_{p},j\neq i}\xi_{ij}^{\top}h_{i}\label{feas}\\
= & -\frac{1}{2}\sum_{p=1}^{k}\bigg(\sum_{i \in S_{p}}\sum_{j\in S_{p},j\neq i}\xi_{ij}^{\top}h_{i} + \sum_{j \in S_{p}}\sum_{i\in S_{p},i\neq j}\xi_{ji}^{\top}h_{j}\bigg) \label{expand}\\
= & -\frac{1}{2}\sum_{p=1}^{k}\sum_{i \in S_{p}}\sum_{j\in S_{p},j\neq i}\xi_{ij}^{\top}\left(h_{i} -h_{j}\right)\label{rearrange}\\
\geq & -\frac{1}{2} \sum_{p=1}^{k}\sum_{i \in S_{p}}\sum_{j\in S_{p}}\|\xi_{ij}\|_{2}\|h_{i}-h_{j}\|_{2}\label{Cauchy}\\
= & -\frac{\gamma}{2} \sum_{p=1}^{k}\sum_{i \in S_{p}}\sum_{j\in S_{p}}\|h_{i}-h_{j}\|_{2},\label{gamma}
\end{align}
where $\gamma:=\max_{p \in [k]}\max_{i,j \in S_{\change p}}\|\xi_{ij}\|_{2}$. By assumption \eqref{S2}, $\gamma < 1$. Note that in \eqref{sub2}, we used \eqref{S1}. In \eqref{feas}, we used $a_{i}^{\top}h_{i} = 0$ (since $H$ is feasible). In \eqref{expand}, both terms in parenthesis are equal by interchanging the dummy variables $i$ and $j$. In \eqref{rearrange}, we used the antisymmetry condition \eqref{S3}. In \eqref{Cauchy}, we used Cauchy-Schwartz inequality.  Combining \eqref{perturbation} and \eqref{gamma}, we get
\begin{align}
	\sum_{p=1}^{k}\bigg(\sum_{i,j \in S_{p}}\|h_{i}-h_{j}\|_{2} + 2\sum_{i\in S_{p}}\sum_{q\neq p}n_{q}v_{p}^{\top}h_{i}\bigg) &\geq (1-\gamma)\sum_{p=1}^{k}\sum_{i,j\in S_{p}}\|h_{p}-h_{q}\|_{2} > 0.\label{hneq0}
\end{align}
The strict inequality in \eqref{hneq0} holds because $(1-\gamma) > 0$ and $H\neq 0$ implies that there exists $p \in {\change \{1,\dots,k\}}$ and $i,j \in S_{p}$ such that $h_{i} \neq h_{j}$. Hence, $Z^{\sharp}$ is the unique solution to \eqref{lasso}
\end{proof}

As a consequence of Lemma \ref{exact}, the proof of Theorem \ref{thm:recovery} is simplified to constructing an exact certificate that satisfies \eqref{S1}, \eqref{S2} and \eqref{S3}.

\begin{proof}[Proof of Theorem \ref{thm:recovery}] For all $p \in {\change \{1,\dots,k\}}$ and $i,j \in S_{p}$, let $\xi_{ij}$ and $\nu_{i}$ be
\begin{align}
	\nu_{i} &= \sign(v_{p}^{\top}a_{i})\frac{\|v_{p}\|_{2}\sum_{q\neq p}n_{q}}{\|P_{v_{p}}a_{i}\|_{2}} \label{nu}\\
	\xi_{ij} &= \frac{1}{n_{p}}\left(\nu_{i}P_{v_{p}^{\perp}}a_{i} - \nu_{j} P_{v_{p}^{\perp}}a_{j}\right). \label{xi}	
\end{align}
By Lemma \ref{exact}, it is sufficient to show that, for all $p \in {\change \{1,\dots,k\}}$ and $i,j \in S_{p}$, these $\xi_{ij}$ and $\nu_{i}$ satisfy \eqref{S1}, \eqref{S2}, \eqref{S3}. Note that \eqref{S3} follows immediately. Condition \eqref{S2} holds because
\begin{align}
\|\xi_{ij}\|_{2} \leq & \frac{1}{n_{p}}\big(\|\nu_{i}P_{v_{p}^{\perp}}a_{i}\|_{2} + \|\nu_{j} P_{v_{p}^{\perp}}a_{j}\|_{2}\big)\label{triangle2} \\
 \leq & \frac{2}{n_{p}} \max_{i \in S_{p}} \|\nu_{i}P_{v_{p}^{\perp}}a_{i}\|_{2}\\
  = & \frac{2}{n_{p}} \max_{i \in S_{p}}\|v_{p}\|_{2}\sum_{q\neq p}n_{q}\frac{\|P_{v_{p}^{\perp}}a_{i}\|_{2}}{\|P_{v_{p}}a_{i}\|_{2}}\label{subst2}\\
  < &\frac{2}{n_{p}}m\frac{n_{p}}{2m}\label{well-sep}\\
  = &\ 1
\end{align}
In \eqref{subst2}, we used \eqref{nu}, and in \eqref{well-sep}, we used \eqref{well-separation} along with $\|v_{p}\|_{2} \leq 1$.

Lastly, we will show that $\eqref{S1}$ holds through direct computation. Fix $p \in {\change \{1,\dots,k\}}$ and $i \in S_{p}$. We now compute
\begin{flalign}
& \sum_{j\in S_{p}, j\neq i} \xi_{ij} + \sum_{q\neq p}n_{q}v_{p} &\\
= & \sum_{j\in S_{p}, j\neq i} \frac{1}{n_{p}}\bigg(\nu_{i}P_{v_{p}^{\perp}}a_{i} - \nu_{j} P_{v_{p}^{\perp}}a_{j}\bigg) + \sum_{q\neq p}n_{q}v_{p}&\\
= & \sum_{j\in S_{p}, j\neq i} \frac{1}{n_{p}}\|v_{p}\|_{2}\bigg(\sum_{q\neq p}n_{q}\bigg)\bigg(\sign(v_{p}^{\top}a_{i})\frac{P_{v_{p}^{\perp}}a_{i}}{\|P_{v_{p}}a_{i}\|_{2}} - \sign(v_{p}^{\top}a_{j})\frac{P_{v_{p}^{\perp}}a_{j}}{\|P_{v_{p}}a_{j}\|_{2}}\bigg) + \sum_{q\neq p}n_{q}v_{p}& \label{subst3}\\
= & \bigg(\sum_{q\neq p}n_{q}\bigg)\frac{\|v_{p}\|_{2}}{n_{p}}\sum_{j\in S_{p}}\bigg(\sign(v_{p}^{\top}a_{i})\frac{P_{v_{p}^{\perp}}a_{i}}{\|P_{v_{p}}a_{i}\|_{2}} - \sign(v_{p}^{\top}a_{j})\frac{P_{v_{p}^{\perp}}a_{j}}{\|P_{v_{p}}a_{j}\|_{2}}\bigg)+\sum_{q\neq p}n_{q}v_{p}&\\
= & \bigg(\sum_{q\neq p}n_{q}\bigg)\frac{\|v_{p}\|_{2}}{n_{p}}\bigg(\sign(v_{p}^{\top}a_{i})n_{p}\frac{P_{v_{p}^{\perp}}a_{i}}{\|P_{v_{p}}a_{i}\|_{2}} - \sum_{j\in S_{p}}\sign(v_{p}^{\top}a_{j})\frac{P_{v_{p}^{\perp}}a_{j}}{\|P_{v_{p}}a_{j}\|_{2}}\bigg)+\bigg(\sum_{q\neq p}n_{q}\bigg)v_{p}& \\
= & \bigg(\sum_{q\neq p}n_{q}\bigg)\|v_{p}\|_{2}\left(\sign(v_{p}^{\top}a_{i})\frac{P_{v_{p}^{\perp}}a_{i}}{\|P_{v_{p}}a_{i}\|_{2}}+\frac{v_{p}}{\|v_{p}\|_{2}}\right)&\label{balanceass}\\
= & \bigg(\sum_{q\neq p}n_{q}\bigg)\frac{\|v_{p}\|_{2}\sign(v_{p}^{\top}a_{i})}{\|P_{v_{p}}a_{i}\|_{2}}\bigg(P_{v_{p}^{\perp}}a_{i}+\sign(v_{p}^{\top}a_{i})\frac{v_{p}}{\|v_{p}\|_{2}}\|P_{v_{p}}a_{i}\|_{2}\bigg)&\\
= & \nu_{i}\bigg(P_{v_{p}^{\perp}}a_{i}+P_{v_{p}}a_{i}\bigg) &\label{Pva}\\
= & \nu_{i}a_{i}. &
\end{flalign}
 Note that in \eqref{subst3}, we used \eqref{nu}. In \eqref{balanceass}, we used the assumed balance condition \eqref{balance}, and in \eqref{Pva}, we used \eqref{nu} and $P_{v_{p}}a_{i} = \sign(v_{p}^{\top}a_{i})\|P_{v_{p}}a_{i}\|_{2}\frac{v_{p}}{\|v_{p}\|_{2}}$. Hence, \eqref{S1} holds. By Lemma \ref{exact}, $Z^{\sharp}$ is the unique solution to \eqref{lasso}. 
\end{proof}


\subsection{Derivation of Dual Certificate}

In this section, we provide a derivation of constructing the dual certificate. Fix $p \in {\change \{1,\dots,k\}}$. Without loss of generality, assume $S_{p} = {\change \{1,\dots,n_{p}\}}$.

Note that \eqref{S1} can be decomposed into its components along and orthogonal to $v_{p}$:
\begin{equation}\label{alongv}
	\sum_{j\in S_{p}, j\neq i} P_{v_{p}}\xi_{ij} + \sum_{q\neq p}n_{q}v_{p} = \nu_{i}P_{v_{p}}a_{i}, \ i \in S_{p}
\end{equation}
\begin{equation} \label{orthov}
	\sum_{j\in S_{p}, j\neq i} P_{v_{p}^{\perp}} \xi_{ij} = \nu_{i}P_{v_{p}^{\perp}} a_{i}, \ i \in S_{p}.
\end{equation} 

Note that solving (\ref{orthov}) is equivalent to solving a system of linear equations $A\xi = b$, where $A \in \mathbb{R}^{n_{p}d \times \frac{n_{p}(n_{p}-1)d}{2}}$ and $b\in \mathbb{R}^{n_{p}d}$. Let $\tilde{B}$ be the first ($n_{p}-1$) block rows of $A$ and $\tilde{b}$ be the first ($n_{p} - 1$) blocks of $b$. So, the $i$th block of $\tilde{b}$ is $\nu_{i}P_{v_{p}^{\perp}}a_{i}$ and the matrix $\tilde{B}$ and the vector $\xi$ are

\begin{equation*}
(\tilde{B})_{il} = \left\{\begin{array}{l l}
						I_{d\times d} & \text{ if } l \in \big\{\sum_{r=1}^{i-1}(n_{p}-r) + s: \ s = 1,\dots, n_{p}-i\big\}\\
						-I_{d\times d} & \text{ if } i >1, \ l \in \big\{ \sum_{r=1}^{i-s}(n_{p}-r)+i-s+1: \ s = 2,\dots, i\big\}\\
						0 & \text{ otherwise,}
						\end{array}	
					\right.
\end{equation*}
\begin{equation*}
(\xi)_{l} = P_{v_{p}^{\perp}}\xi_{ij} \text{ if } i = \argmax_{\tilde{i}\in[n_{p}-1]}\sum_{s = 1}^{\tilde{i}}\frac{l-\sum_{r=1}^{s-1}(n_{p}-r)}{\left|l-\sum_{r=1}^{s-1}(n_{p}-r)\right|},\ j = l-\sum_{r=1}^{i-1}(n_{p}-r)+i.
\end{equation*}

It is straightforward to verify that $\tilde{B}$ is full row rank and the least squares solution of $\tilde{B}\xi = \tilde{b}$ is also a solution to $A\xi= b$ if
\begin{equation} \label{ls}
	\sum_{i = 1}^{n_{p}} \nu_{i}P_{v_{p}^{\perp}}a_{i} = 0.
\end{equation}
Now, for all $j\in S_{p}$, choose $P_{v_{p}}\xi_{ij} = 0$ in \eqref{alongv}. Then, for all $i \in S_{p}$,
\begin{equation}
	\nu_{i} = \sign(v_{p}^{\top}a_{i})\frac{\|v_{p}\|_{2}\sum_{q\neq p}n_{q}}{\|P_{v_{p}}a_{i}\|_{2}}.
\end{equation}
Using these $\nu_{i}$, \eqref{ls} is satisfied because of the assumed balance condition \eqref{balance}. We note that the least 2-norm solution of $\tilde{B}\xi = \tilde{b}$ is $\xi = \tilde{B}^{T}(\tilde{B}\tilde{B}^{T})^{-1}\tilde{b}$, where
\[(\tilde{B}\tilde{B}^{T})_{ls} = \left\{\begin{array}{cc}
(n_{p} -1)I_{d\times d} & \text{if } l =s\\[.1em]
-I_{d\times d} & \text{if } l \neq s,
\end{array}
\right. \text{ and}
\]
\[
((\tilde{B}\tilde{B}^{T})^{-1})_{ls} = \left\{\begin{array}{cc}
\frac{2}{n_{p}}I_{d\times d} & \text{if } l = s\\[.5em] 
\frac{1}{n_{p}}I_{d\times d} & \text{if } l \neq s.
\end{array}
\right.
\]
Here, $\tilde{B}\tilde{B}^{T} \in \mathbb{R}^{d(n_{j}-1)\times d(n_{j}-1)}$. Solving for $\xi$, we get $P_{v_{p}^{\perp}}\xi_{ij} = \frac{\nu_{i}P_{v_{p}^{\perp}}a_{i}}{n_{p}} - \frac{\nu_{j}P_{v_{p}^{\perp}}a_{j}}{n_{p}}$. Since $P_{v_{p}}\xi_{ij} = 0$, we have $\xi_{ij} = \frac{\nu_{i}P_{v_{p}^{\perp}}a_{i}}{n_{p}} - \frac{\nu_{j}P_{v_{p}^{\perp}}a_{j}}{n_{p}}$.


\section{Numerical Results}
\label{Simulation}
In this section we formulate an iteratively reweighted least squares (IRLS) algorithm for \eqref{lasso} and provide numerical results on real and synthetic data. {\change The general idea of the IRLS algorithm is to approximate a non-smooth objective function, as in \eqref{lasso}, with a sequence of smooth functionals that converges to the objective function. In our implementation of the IRLS algorithm for \eqref{lasso}, the smooth approximation results in a weighted least squares problem which can be solved through the normal equations.} The pseudocode is given in Algorithm \ref{alg:IRLS}. We refer the readers to \cite{Bissantz} for convergence result of the IRLS algorithm.

Algorithm \ref{alg:IRLS} outputs estimated mixture components for each data point. These estimates can then be clustered using $k$-means. The mixture component corresponding to each class is obtained by running $k$ separate regressions.

 When implementing the IRLS algorithm, we observe convergence for a fixed $\delta \ll 1$ ({\change $\delta_t$ in Algorithm \ref{alg:IRLS}} is fixed to $10^{-16}$). The maximum number of iterations is 150. Let $Z_{t}^{\natural}$ be the minimizer of the quadratic program in the IRLS algorithm at iteration $t$. The stopping {\change criterion} is $\frac{1}{\sqrt{m}}\|Z_{t+1}^{\natural} - Z_{t}^{\natural}\|_{F} < 10^{-5}$.
 
\vspace{-6mm}
\begin{Algorithm}[H]
\caption{Iteratively Reweighted Least Squares for \eqref{lasso}}
\label{alg:IRLS}
\begin{algorithmic}[1]
\Statex Input: Measurements $\{(a_{i},b_{i})\}_{i=1}^{m}$,  regularization parameter $\delta_{t} \shortarrow{7} 0$, maximum iteration number $t_{max}$, $w_{ij}^{(0)} = 1$.
\Statex Output: Estimates $\{z_{1},\dots, z_{m}\}$
\While {$ \ t \leq t_{max}$ or convergence }:
\State ${z_{1}^{(t)},\dots,z_{m}^{(t)}} \leftarrow \argmin_{z_{1},\dots,z_{m}} \sum_{i=1}^{m}\sum_{j=1}^{m} w_{ij}^{(t)}\|z_{i}-z_{j}\|_{2}^{2} \ \ \text{s.t.} \ \ a_{i}^{\top}z_{i} = b_{i}, \ i\in {\change \{1,\dots,m\}}$
\State $w_{ij}^{(t+1)} \leftarrow \big(\|z_{i}^{(t)}-z_{j}^{(t)}\|_{2}^{2} + \delta_{t} \big)^{-\frac{1}{2}}$
\EndWhile
\end{algorithmic}
\end{Algorithm}
\vspace{-.6mm}

We use Algorithm \ref{alg:IRLS}, along with $k$-means, to classify two real datasets. {\change In both of these datasets, the true labels of the datapoints are unknown. We compare the output of Algorithm \ref{alg:IRLS} followed by k-means with the output of an algorithm introduced in \cite{Chen1}. The algorithm introduced in \cite{Chen1} recovers an estimate of the mixture components when used on a mixed linear regression problem with bounded but arbitrary noise. For ease of reference, this algorithm is presented in Algorithm \ref{alg:Rank}.}
\vspace{-6mm}
\floatname{algorithm}{\change Algorithm}
\begin{Algorithm}[H]
\caption{\change Low-Rank Matrix Completion for Mixed Linear Regression}\label{alg:Rank}
\change
\begin{algorithmic}[1]
\Statex Input: Measurements $\{(a_{i},b_{i})\}_{i=1}^{m}$, noise  parameter $\eta >0$. 
\Statex Output: Estimates $\{\beta_{1} ,\beta_{2}\}$

\State ${K,g} \leftarrow \argmin_{K,g} \|K\|_{*} \ \ \text{s.t.} \ \ \sum_{i=1}^{m}\left|-\langle a_{i}a_{i}^\top,K\rangle + 2b_{i}\langle a_{i},g\rangle - b_{i}^2 \right|\leq \eta. $
\State $\lambda$, $v \leftarrow$ \text{first eigenvalue-eigenvector pair of } $gg^\top-K$.
\State $(\beta_{1},\beta_2) \leftarrow (g-\sqrt{\lambda}v, g+\sqrt{\lambda}v)$.
\end{algorithmic}
\end{Algorithm}
\vspace{-2mm}

\begin{figure}[H]
  \begin{tabular}[c]{cc}
    \begin{subfigure}[c]{0.45\textwidth}
      \includegraphics[scale = .4]{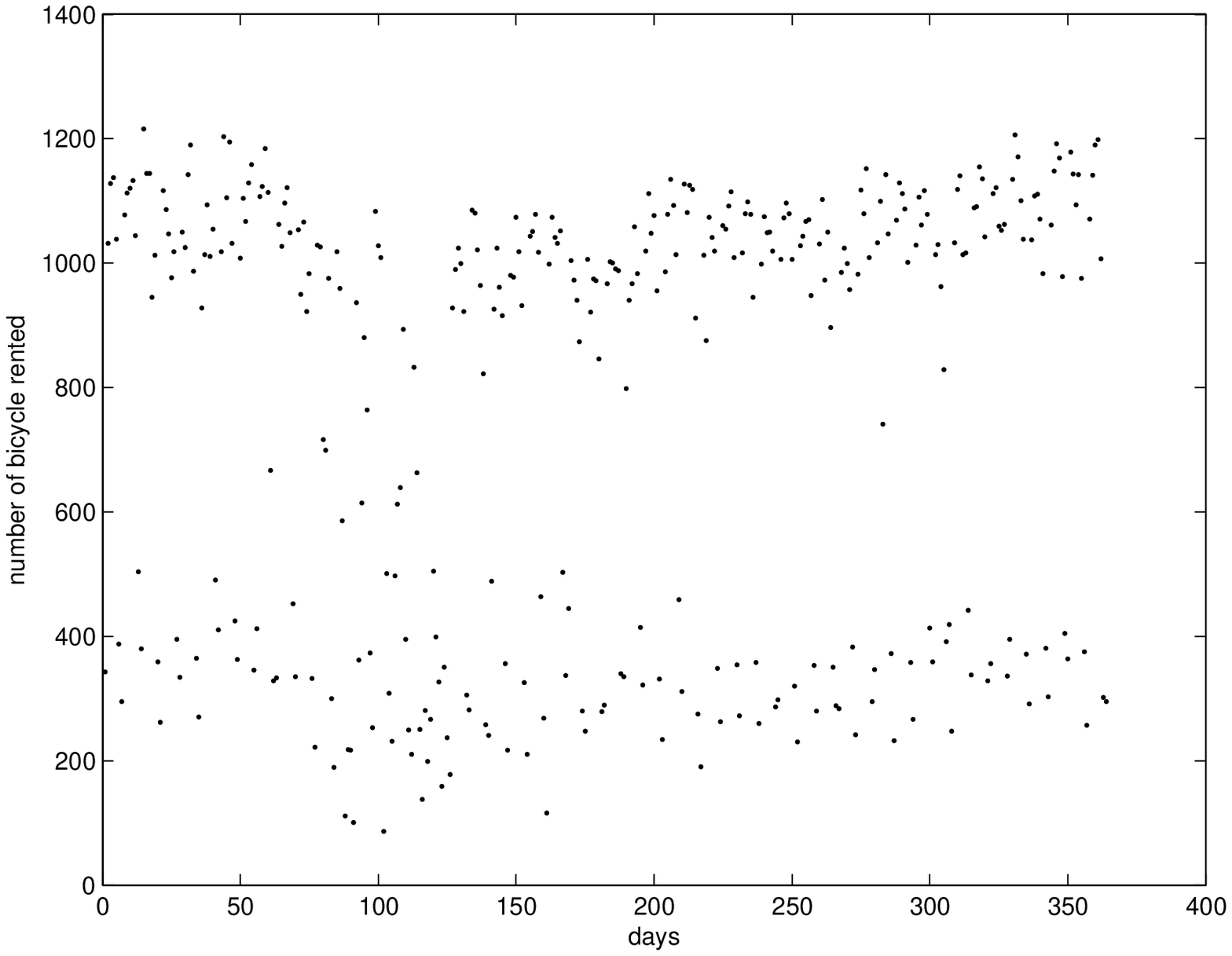}
      \caption{}
      \label{b_datapoints}
    \end{subfigure}&
    \begin{subfigure}[c]{0.45\textwidth}
      \includegraphics[scale = .4]{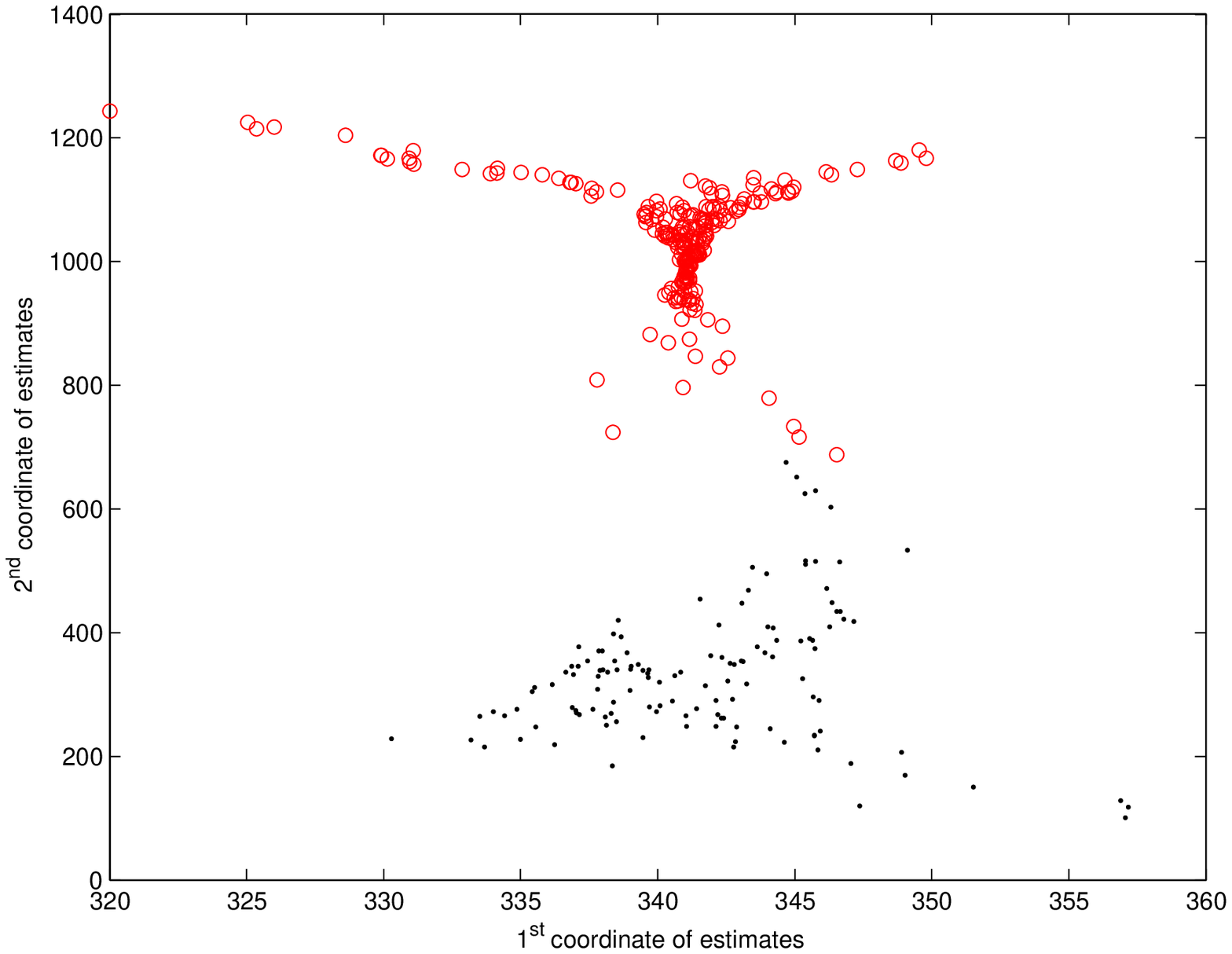}
      \caption{}
      \label{b_estimates}
    \end{subfigure}\\
    \begin{subfigure}[c]{0.45\textwidth}
      \includegraphics[scale = .4]{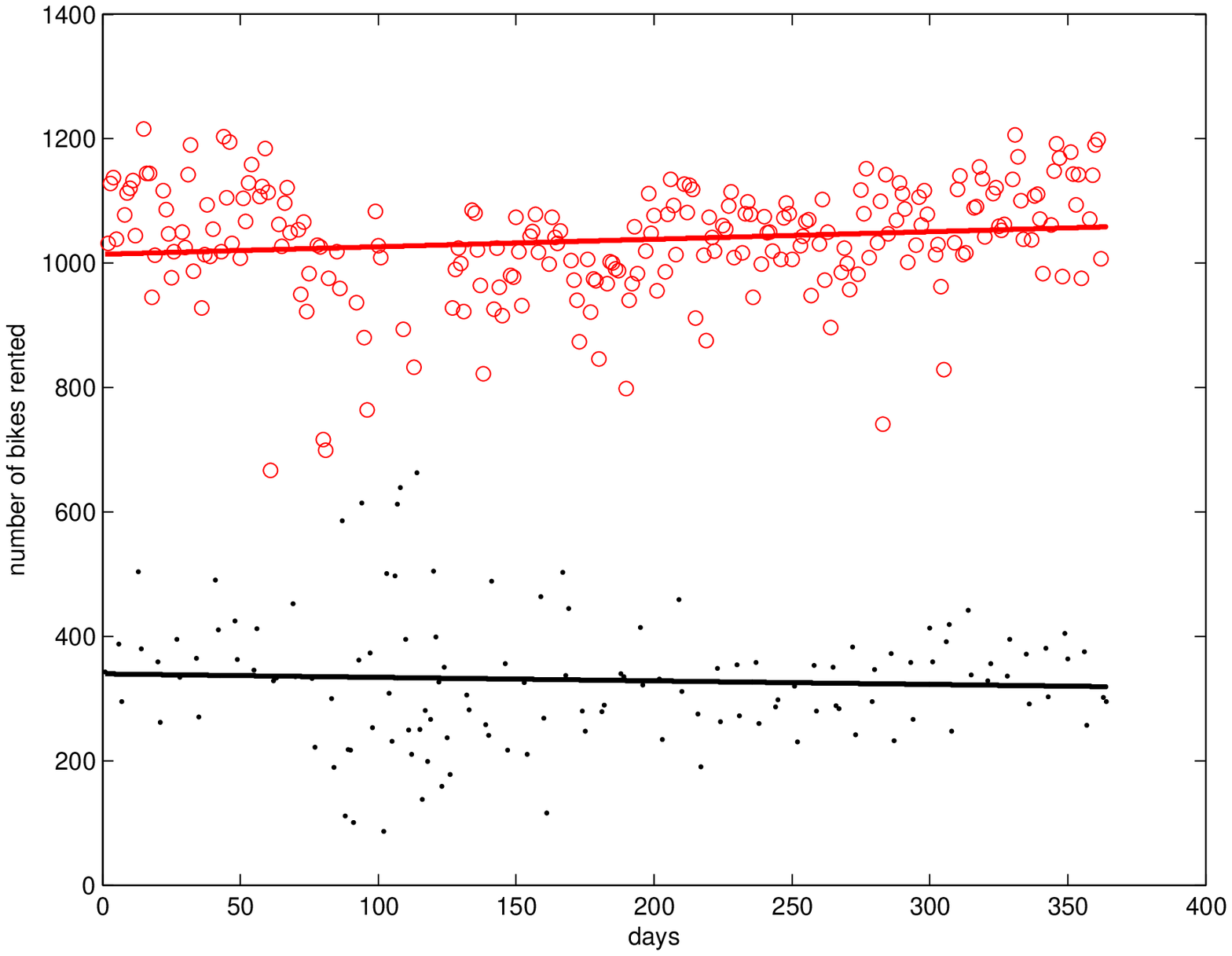}
      \caption{}
		\label{b_fit}
    \end{subfigure}&
    \begin{subfigure}[c]{0.45\textwidth}
      \includegraphics[scale = .4]{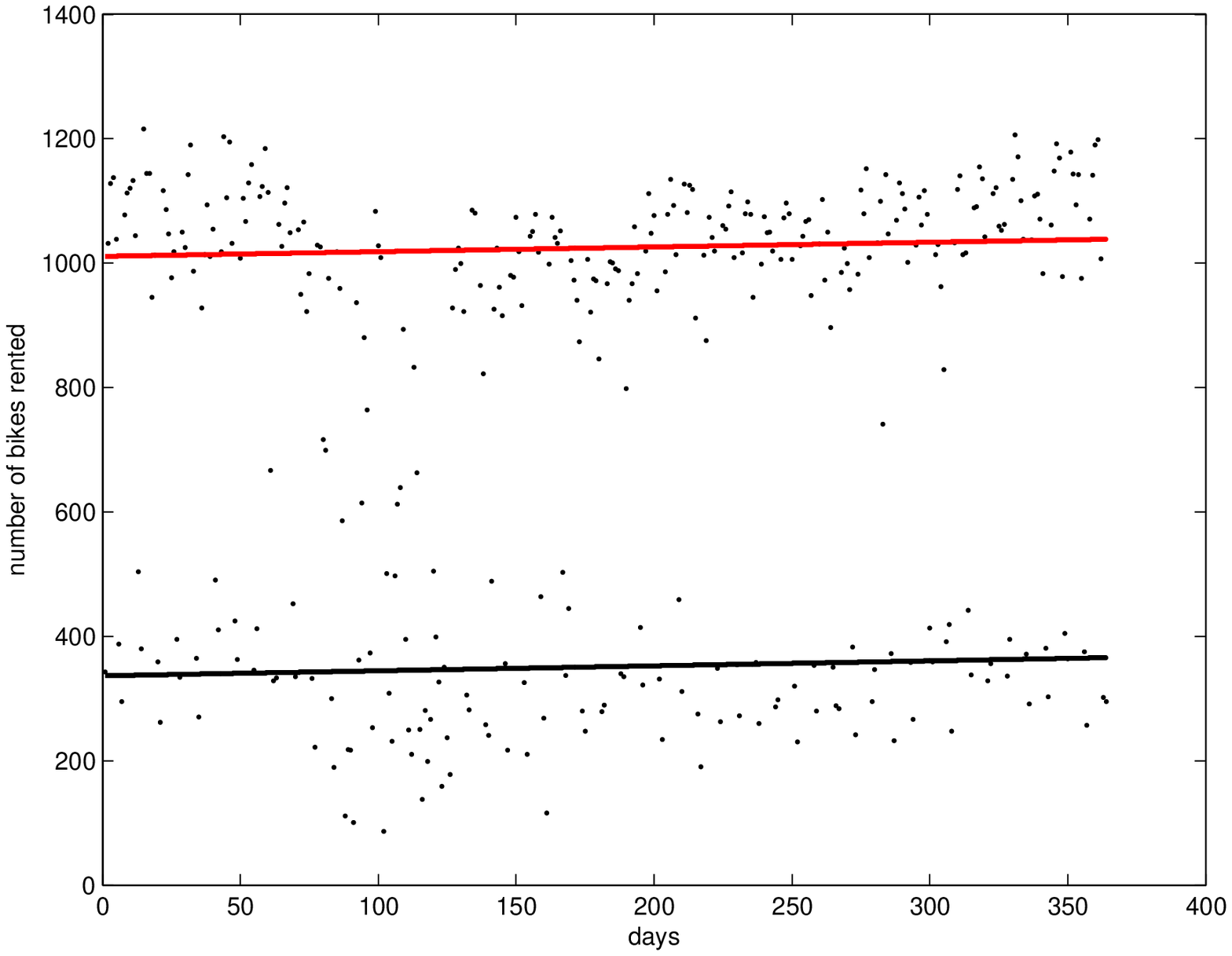}
      \caption{}
		\label{b_fit_lift}
    \end{subfigure}\\
  \end{tabular} 
 \captionsetup{oneside,margin={0em,0em}}
  \caption{Panel (a) shows data consisting of the number of bikes rented per day in the Bay Area BikeShare program. {\change A circle or a dot in panel (b) is an estimate of one of the mixture components corresponding to a data point in panel (a). Panel (b) shows the output of \eqref{lasso} followed by $k$-means with $k = 2$. Panel (c) shows the fitted lines obtained using two separate regressions. In panel (c), points in first and second classes are represented by dots and circles, respectively. Qualitatively, the classification presented in panel (c) seems to differentiate between weekend and weekday bike rental trends. Panel (d) shows the result of Algorithm \ref{alg:Rank} with $\eta = 100$.}}
\end{figure}

\begin{figure}[H]
  \begin{tabular}[c]{cc}
    \begin{subfigure}[c]{0.45\textwidth}
      \includegraphics[scale = .4]{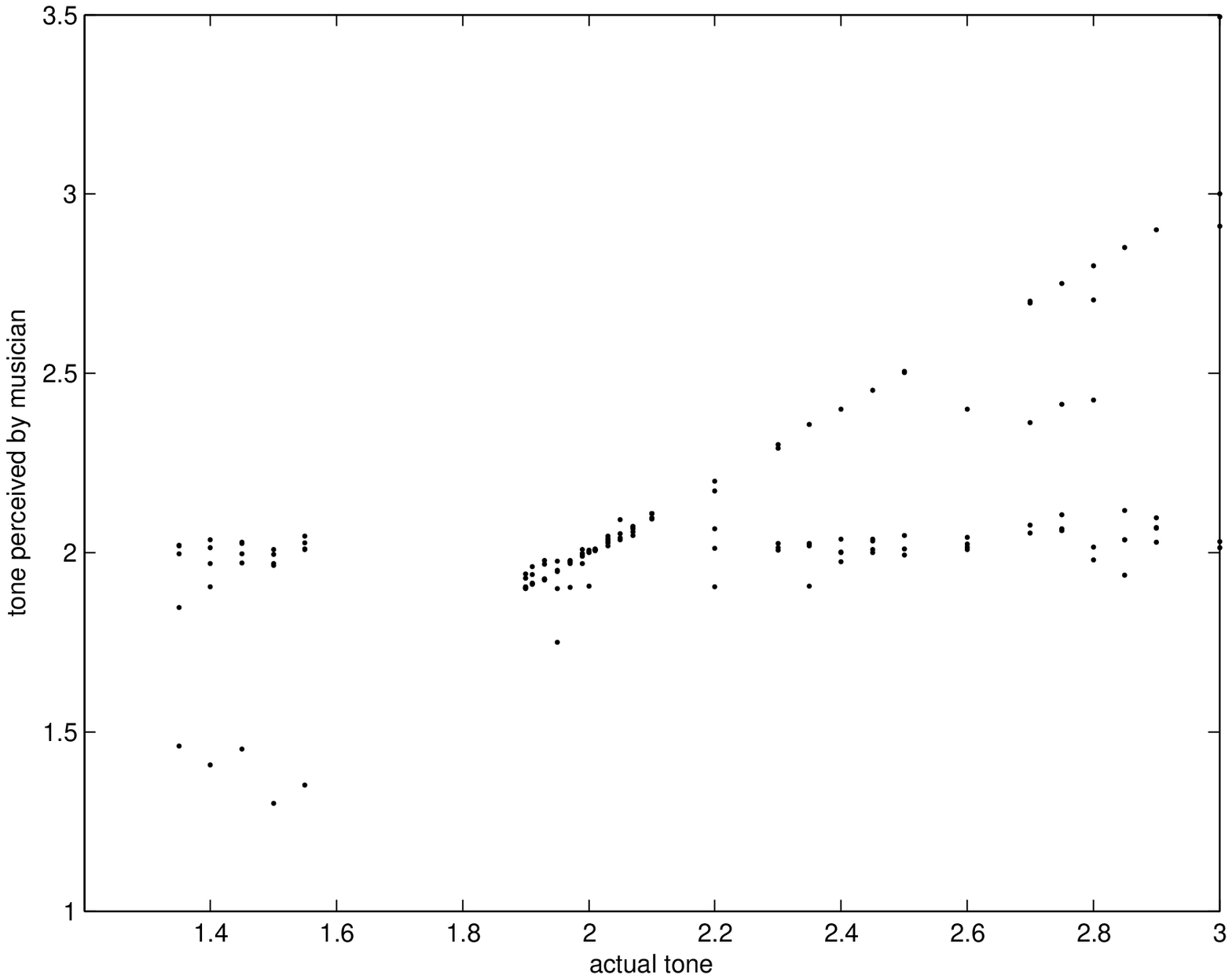}
      \caption{}
      \label{t_datapoints}
    \end{subfigure}&
    \begin{subfigure}[c]{0.45\textwidth}
      \includegraphics[scale = .4]{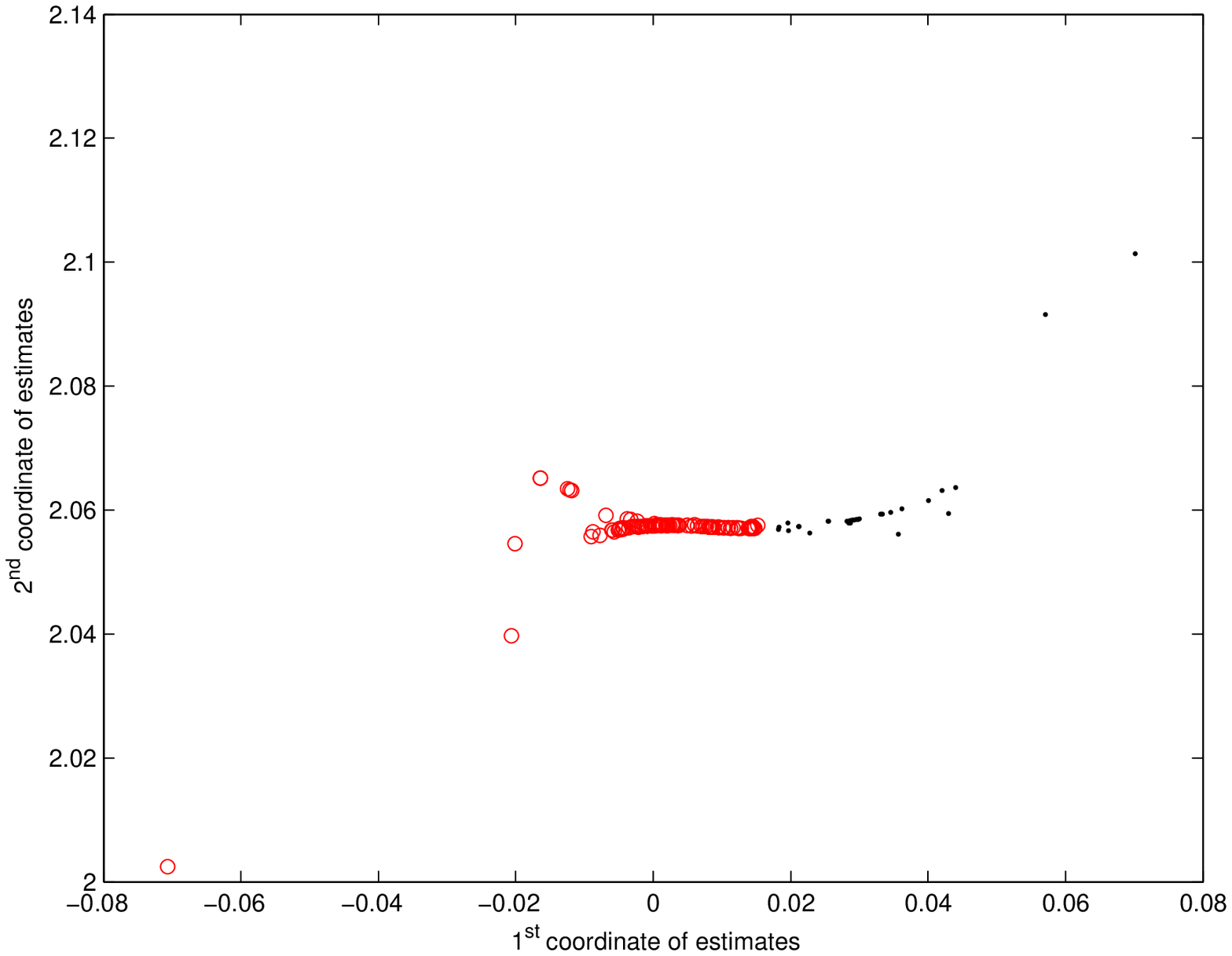}
      \caption{}
      \label{t_estimates}
    \end{subfigure}\\
    \begin{subfigure}[c]{0.45\textwidth}
      \includegraphics[scale = .4]{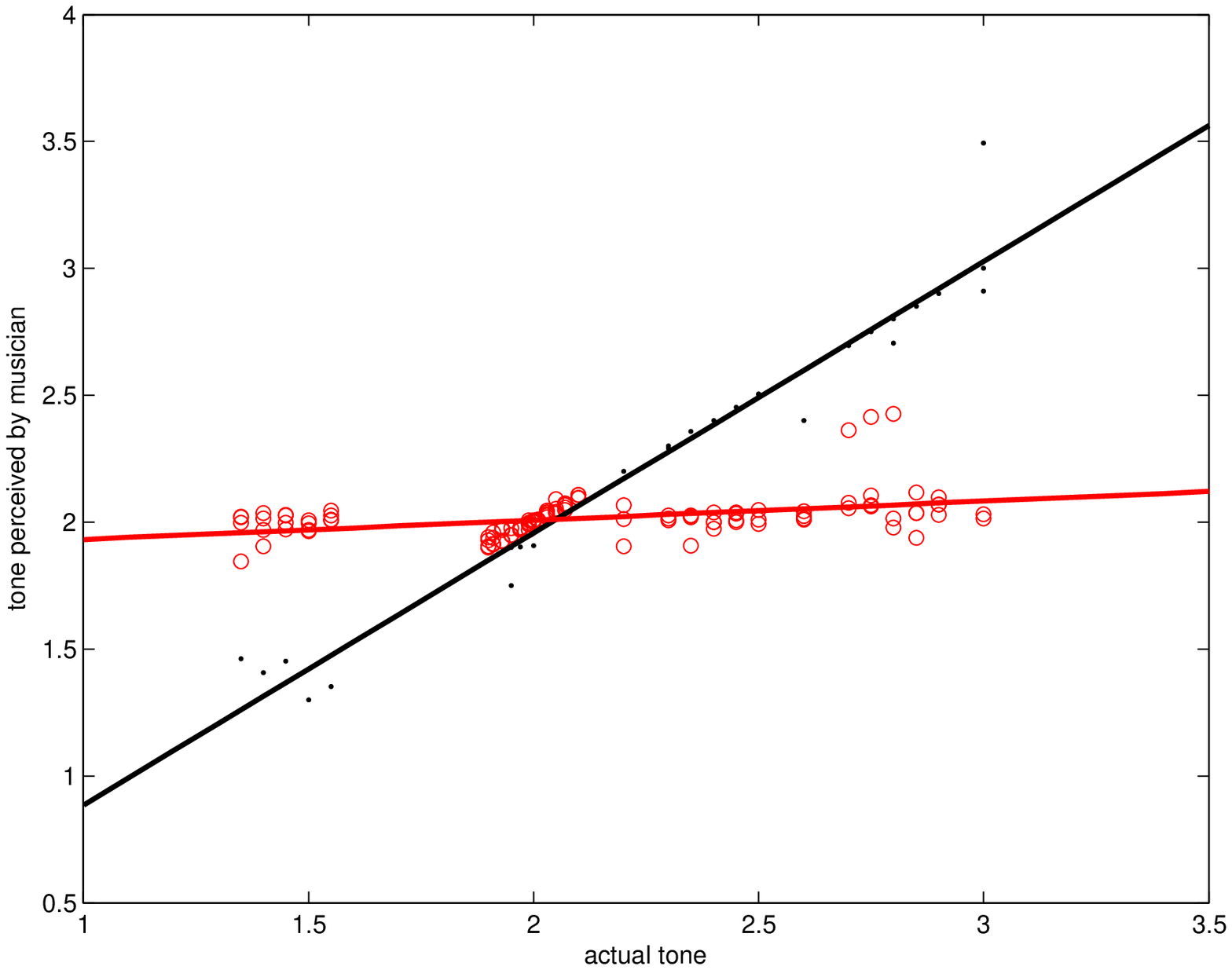}
      \caption{}
		\label{t_fit}
    \end{subfigure}&
    \begin{subfigure}[c]{0.45\textwidth}
      \includegraphics[scale = .4]{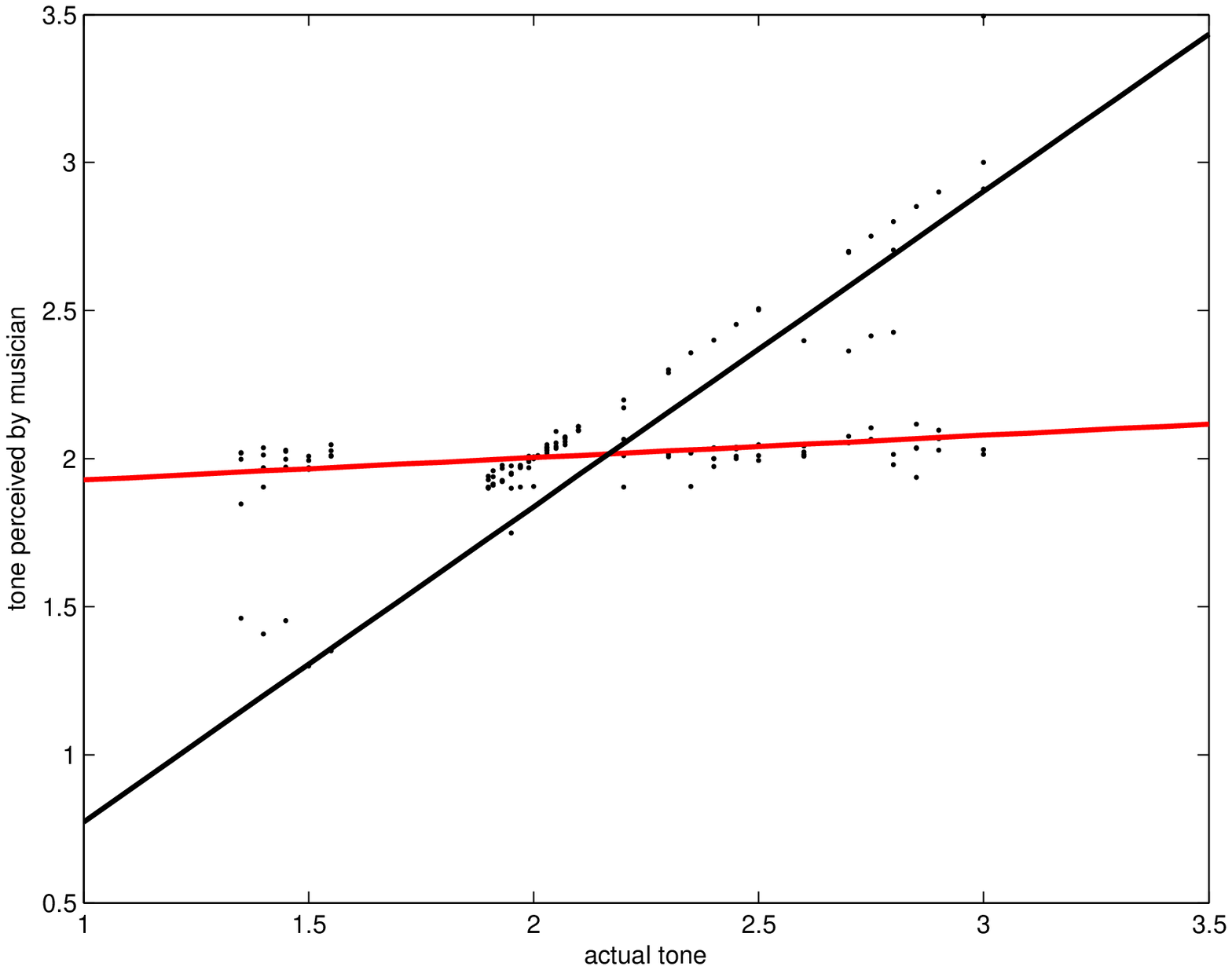}
      \caption{}
		\label{t_fit_lift}
    \end{subfigure}\\
  \end{tabular} 
  \captionsetup{oneside,margin={0em,0em}}
  \caption{Panel (a) shows perceived tone by a musician versus actual tone for a range of tones. {\change A circle or a dot in panel (b) is an estimate of one of the mixture components corresponding to a data point in panel (a). Panel (b) shows the output of \eqref{lasso} followed by $k$-means with $k = 2$. Panel (c) shows the fitted lines obtained using two separate regressions. In panel (c), points in first and second classes are represented by dots and circles, respectively. Panel (d) shows the result of Algorithm \ref{alg:Rank} with $\eta = 4$.}}
\end{figure}
\vspace{-.1cm}

First, we analyze the BikeShare data that contains the number of bikes rented in a day in the San Francisco bay area from September 1, 2014 to August 30, 2015. The data is provided by the Bay Area BikeShare program\footnote{Dataset can be obtained from the following URL: http://www.bayareabikeshare.com/open-data}. In this dataset, $m = 364$ and $d = 2$. {\change For each measurement $(a_{i},b_{i}) \in \mathbb{R}^{2}\times \mathbb{R}$, $a_{i1}=i$ is the independent variable of a simple linear model, $a_{i2} = 1$ corresponds to the constant part and $b_{i}$ is the response. In this model, $b_{i}$ is the number of bikes rented.} Let $\mu = {\change \frac{1}{m}}\sum_{i=1}^{m}a_{i1}$ and $\alpha = 0.005$. {\change Here, the average $\mu$ is used to center the dataset and $\alpha$ is used to ensure the dataset is sufficiently well-separated.} Let $\tilde{a}_{i1} = \alpha(a_{i1}-\mu)$ and let $\tilde{a}_{i} = (\tilde{a}_{i1},a_{i2})$. Algorithm \ref{alg:IRLS} is then used on the dataset $\{(\tilde{a}_{i},b_{i})\}_{i=1}^{m}$. {\change For purpose of illustration,} $k$-means with $k = 2$ is used on the output of Algorithm \ref{alg:IRLS} to estimate the mixture components. The result of this process is shown in Figure \ref{b_fit}, {\change where the classification seems to differentiate between weekend and weekday bike rental trends. For comparison, the result of Algorithm \ref{alg:Rank} on the dataset $\{(\tilde{a}_{i},b_{i})\}_{i=1}^{m}$ with $\eta$ set to 100 obtained using SDPT3 solver is shown in Figure \ref{b_fit_lift}. }

Second, we consider music tone perception data which shows the relationship between actual tone and tone perceived by a musician. This data was generated in an experiment conducted by Cohen in 1980 \cite{Cohen}. In this dataset, $m = 150$ and $d = 2$. {\change For each measurement $(a_{i},b_{i}) \in \mathbb{R}^{2}\times \mathbb{R}$, $a_{i1}=i$ is the independent variable of a simple linear model, $a_{i2} = 1$ corresponds to the constant part and $b_{i}$ is the response. In this model, $a_{i1}$ is the actual tone and $b_{i}$ is the perceived tone.} Let $\mu = {\change \frac{1}{m}}\sum_{i=1}^{m}a_{i1}$ and $\alpha = 40$. {\change Here, the average $\mu$ is used to center the dataset and $\alpha$ is used to ensure the dataset is sufficiently well-separated.} Let $\tilde{a}_{i1} = \alpha(a_{i1}-\mu)$ and let $\tilde{a}_{i} = (\tilde{a}_{i1},a_{i2})$. Algorithm \ref{alg:IRLS} is then used on the dataset $\{(\tilde{a}_{i},b_{i})\}_{i=1}^{m}$. {\change For purpose of illustration,} $k$-means with $k = 2$ is used on the output of Algorithm \ref{alg:IRLS} to estimate the mixture components. The result of this process is shown in Figure \ref{t_fit}. {\change For comparision, the result of Algorithm \ref{alg:Rank} on the dataset $\{(\tilde{a}_{i},b_{i})\}_{i=1}^{m}$ with $\eta$ set to 4 obtained using SDPT3 solver is shown in Figure \ref{b_fit_lift}.}

{\change  We also provide two simulation results on synthetic data. The first simulation result verifies Theorem \ref{thm:recovery} and the second simulation result shows recovery using $\eqref{lasso}$ in the case with imbalanced measurements is possible.}

	\begin{figure}[H]
		\centering
		\includegraphics[scale  = 0.4]{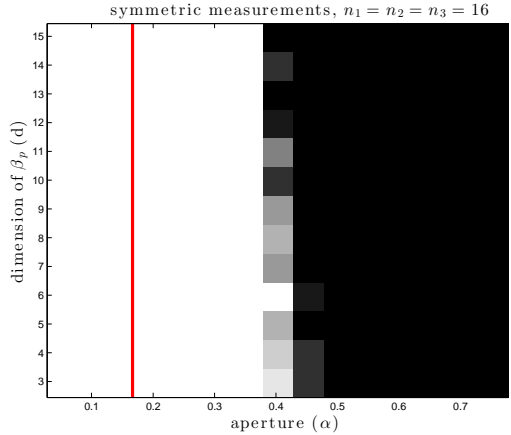}
		\captionsetup{oneside,margin={0em,0em}}
		\caption{The empirical recovery probability from synthetic data with three mixture components as a function of dimension, $d$, and aperture, $\alpha$. The shades of black and white represents the fraction of successful simulation. White blocks correspond to successful recovery and black blocks correspond to unsuccessful recovery. Each block corresponds to the average from 10 independent trials.}
		\label{fig:dimension1}
	\end{figure}

{\change For the first simulation,} let $k = 3$, $d \in \{3,\dots,15\}$ and $m = 48$. Let $\{\beta_{p}\}_{p=1}^{k}\subset \mathbb{R}^{d}$ be such that $(\beta_{p})_{l} = 1$ if $l = p$ and $0$ otherwise. Let $n_{1} = n_{2} = n_{3} = 16$. For $P_{v_{p}^{\perp}}$, let the columns of $Q_{p}\in \mathbb{R}^{d\times(d-1)}$ be  an orthonormal basis of the column space of $P_{v_{p}^{\perp}}$. 

Consider the following measurements {\change for the first simulation}: Fix $p \in {\change \{1,\dots,k\}}$. For $i \in {\change \{1,\dots,\frac{n_{p}}{2}\}}$, let $x_{i} \sim \text{Uniform}(B_{\alpha}^{d-1})$, i.e. $x_{i}$ is a random point uniformly distributed in the $d-1$ dimensional $\ell_{2}$ ball of radius $\alpha$ centered at the origin. Here, hyperplanes corresponding to labels in $S_{p}$ are contained in an aperture  $\alpha \in [0,0.75]$. For $i \in {\change \{1,\dots,\frac{n_{p}}{2}\}}$, let $a_{i} = \hat{v}_{p}+Q_{p}x_{i}$ and for $i \in \{\frac{n_{p}}{2} +1, \dots, n_{p}\}$, let $a_{i}= \hat{v}_{p}-Q_{p}x_{i-\frac{n_{p}}{2}}$. These measurements are symmetric and satisfy the balance condition since there exists pairs $i,j \in S_{p}$ such that 
\begin{equation}\label{symm}
	\frac{P_{v_{p}^{\perp}}a_{i}}{\|P_{v_{p}}a_{i}\|_{2}} = -\frac{P_{v_{p}^{\perp}}a_{j}}{\|P_{v_{p}}a_{j}\|_{2}}.
\end{equation}
Lastly, for $i \in {\change \{1,\dots,n_{p}\}}$, let $b_{i} = a_{i}^{\top}\beta_{p}$.

Figure \ref{fig:dimension1} shows the fraction of successful recovery from 10 independent trials for mixed linear regression from data as described above.  Black squares correspond to no successful recovery and white squares to 100\% successful recovery. Let $Z^{\sharp}$ be the candidate minimizer of \eqref{lasso} and let $Z^{\natural}$ be the output of $\eqref{lasso}$. For each trial, we say \eqref{lasso} successfully recovers the mixture components if $\frac{1}{\sqrt{m}}\|Z^{\natural} - Z^{\sharp}\|_{F} < 10^{-5}$. {\change This evaluation metric is used because we want to provide numerical verification of Theorem \ref{thm:recovery}.} In the area to the left of the line, the measurements are well-separated in the sense of \eqref{well-separation}, i.e. $\max_{p\in [3]}\max_{i\in S_{p}}\frac{\|P_{v_{p}^{\perp}}a_{i}\|_{2}}{\|P_{v_{p}}a_{i}\|_{2}} < \frac{1}{6}$. The figure also shows that when measurements are not well-separated, recovery using \eqref{lasso} will likely fail.     
	
\begin{figure}[H]
	\centering
	\includegraphics[scale = .4,center]{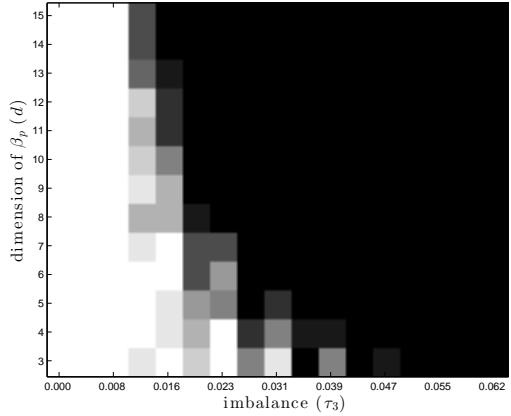} 
	\captionsetup{oneside,margin={0em,0em}}
	\caption{\change The empirical recovery probability from synthetic data with three mixture components as a function of dimension, $d$, and imbalance, $\tau_3$, of measurements in $3$rd class. The measurements in classes $1$ and $2$ are exactly balanced, i.e. $\tau_1 = \tau_2 = 0$. The shades of black and white represents the fraction of successful simulation. White blocks correspond to successful recovery and black blocks correspond to unsuccessful recovery. Each block corresponds to the average from 10 independent trials.}
	\label{fig:dimension3_avg}
\end{figure}
	
{\change For the second simulation, let $k = 3$ and $d \in \{3,\dots,15\}$. Let $\{\beta_{p}\}_{p=1}^{k}\subset \mathbb{R}^{d}$ be such that $(\beta_{p})_{l} = 1$ if $l = p$ and $0$ otherwise. Let $n_{1} = n_{2} = n_{3} = 4d$. For $P_{v_{p}^{\perp}}$, let the columns of $Q_{p}\in \mathbb{R}^{d\times(d-1)}$ be  an orthonormal basis of the column space of $P_{v_{p}^{\perp}}$, as in the first simulation.

Consider the following measurements for the second simulation: Fix the aperture, $\alpha = 0.2$. For $p \in \{1,2\}$ and $i \in \{1,\dots,\frac{n_{p}}{2}\}$, let $x_{i} \sim \text{Uniform}(B_{\alpha}^{d-1})$. For $i \in \{1,\dots,\frac{n_{p}}{2}\}$, let $a_{i} = \hat{v}_{p}+Q_{p}x_{i}$ and for $i \in \{\frac{n_{p}}{2} +1, \dots, n_{p}\}$, let $a_{i}= \hat{v}_{p}-Q_{p}x_{i-\frac{n_{p}}{2}}$. These measurements are symmetric as in the first simulation. For measurements that belong to the $3$rd class, we perturb the symmetric  measurements to achieve the desired level of imbalance. Specifically, for $p = 3$ and $i \in \{1,\dots,\frac{n_{p}}{2}\}$, let $x_{i} \sim \text{Uniform}(B_{\alpha}^{d-1})$. For $i \in \{1,\dots,\frac{n_{p}}{2}\}$, let $\tilde{a}_{i} = \hat{v}_{p}+Q_{p}x_{i}$ and for $i \in \{\frac{n_{p}}{2} +1, \dots, n_{p}\}$, let $\tilde{a}_{i}= \hat{v}_{p}-Q_{p}x_{i-\frac{n_{p}}{2}}$. Let $w \sim \text{Uniform}(S_{\tau_p}^{d-1})$ and for $i\in \{1,\dots,n_p\}$, let $a_{i} = \tilde{a_{i}}+Q_{p}w$. Here, $\tau_p \in [0,0.062]$ is the measure of imbalance of the measurements that belongs to the $p$th class. Note that 
\begin{equation}
	\tau_p = \frac{1}{n_p}\left \|\sum_{i\in S_{p}}\sign(v_{p}^{\top}a_{i})\frac{P_{v_{p}^{\perp}}a_{i}}{\|P_{v_{p}}a_{i}\|_{2}}\right\|_{2}.
\end{equation}
Lastly, for $i \in \{1,\dots,n_{p}\}$, let $b_{i} = a_{i}^{\top}\beta_{p}$.

Figure \ref{fig:dimension3_avg} show the fraction of successful recovery from 10 independent trials for mixed linear regression from data as described above. In figure \ref{fig:dimension3_avg}, the number of measurements in each class is four times the dimension. Black squares correspond to no successful recovery and white squares to 100\% successful recovery. Let $Z^{\sharp}$ be the candidate minimizer of \eqref{lasso} and let $Z^{\natural}$ be the output of $\eqref{lasso}$. For each trial, we say \eqref{lasso} successfully recovers the mixture components if $\frac{1}{\sqrt{m}}\|Z^{\natural} - Z^{\sharp}\|_{F} < 10^{-5}$. The figure shows that recovery using \eqref{lasso} from imbalanced measurements is possible if the number of measurements scale linearly with dimension of the mixture components.
}	

		
\section*{Acknowledgements}
\noindent PH acknowledges funding by the grant NSF DMS-1464525.

\bibliographystyle{abbrv}
\bibliography{MLR_191216}
\end{document}